\newtheorem{theorem}{Theorem}
\theoremstyle{plain}
\newtheorem*{acknowledgement}{Acknowledgement}
\newtheorem{lemma}{Lemma}
\newtheorem{proposition}{Proposition}
\theoremstyle{remark}
\newtheorem{remark}{Remark}
\newtheorem{example}{Example}
\newtheorem{variant}{Variant}
\numberwithin{equation}{section}
\begin{document}
\title[Realizations for the $K$-theory of varieties]{The standard realizations for the\linebreak$K$-theory of varieties}
\author{Oliver Braunling}
\address{University of Freiburg, Germany}
\email{oliver.braunling@gmail.com}
\author{Michael Groechenig}
\address{Department of Mathematics, University of Toronto, Canada}
\email{michael.groechenig@utoronto.ca}
\author{Anubhav Nanavaty}
\address{University of California, Irvine, USA}
\email{nanavaty@uci.edu}

\thanks{O. B. was supported by DFG GK1821 \textquotedblleft Cohomological Methods
in Geometry\textquotedblright\ . M. G. was supported by NSERC Discovery Grant RGPIN-2019-05264. A. N. was supported by DMS-1944862.}

\begin{abstract}
The Grothendieck ring of varieties has well-known realization maps to, say,
mixed Hodge structures or compactly supported $\ell$-adic cohomology.
Zakharevich and\ Campbell have developed {a spectral
refinement} of the Grothendieck ring of varieties. We develop a realization map to Voevodsky
mixed motives, and this lifts the standard realizations of motives to this
setting, at least over perfect fields which have resolution of singularities.
\end{abstract}
\maketitle
%


\section{Motivation}

Let $k$ be a perfect field. We write $K_{0}(\mathcal{V}_{k})$ for the
Grothendieck ring of varieties. There are the standard motivic realizations, also known as motivic measures,
like $\ell$-adic%
\[
K_{0}(\mathcal{V}_{k})\longrightarrow K_{0}(\operatorname*{Rep}%
\nolimits_{\operatorname*{Gal}(k^{\operatorname*{sep}}/k)}(\mathbb{Q}_{\ell}))
\]
(for $\ell \in k^{\times}$) or to mixed Hodge structures, for example in the format of Hodge--Deligne
polynomials%
\begin{align*}
K_{0}(\mathcal{V}_{k})  & \longrightarrow\mathbb{Z}[u,v]\\
X  & \longmapsto\sum_{p,q\in\mathbb{Z}}(-1)^{p+q}h^{p,q}(H_{c}^{p+q}%
(X_{\mathbb{C}},\mathbb{C}))\cdot u^{p}v^{q}%
\end{align*}
based on the Hodge numbers of compactly supported cohomology. The purpose of
this paper is to extend such realizations beyond $K_0$. For the Betti and \'{e}tale realization this problem was already solved by Campbell, Wolfson and Zakharevich in \cite{MR3993931}. In this paper we develop an axiomatic approach: \textit{Every invariant of smooth varieties which satisfies $h$-descent and $\mathbb{A}^1$-invariance gives rise to a canonical realization}.

There are two approaches to extend the Grothendieck ring of varieties to a ring spectrum $K(\mathcal{V}_{k})$ such that
\[
\pi_{0}K(\mathcal{V}_{k})=K_{0}(\mathcal{V}_{k})\text{,}%
\]
where the latter refers to the classical definition of the Grothendieck ring of
varieties. Setting up such a spectral refinement was pioneered by Zakharevich \cite{zaka,zakb}. There is also a different approach by Campbell \cite{MR3955537}. That both methods agree, at least on the level of spaces, was shown in \cite{czpreprint}.
Our realizations will be set up as maps of spectra mapping out of Campbell's model.

We construct the realizations in two steps: (1) First we construct a realization to geometric mixed motives in the sense of Voevodsky. (2) Then we use that invariants satisfying $h$-descent and $\mathbb{A}^1$-invariance pin down realization functors for such motives. Composing both maps produces the desired realizations.


Let $A$ be a commutative unital ring, serving as our ring of coefficients. We
write $\mathcal{DM}_{gm,t}^{eff}(k;A)$ for the $A$-linear DG\ category of
geometric mixed motives in the topology $t\in\{\acute{e}t,Nis\}$. We extract a Waldhausen category from such a DG category such that the weak
equivalences are quasi-isomorphisms in the DG\ sense.

 Suppose $\mathcal{V}%
_{k}$ denotes the $SW$-category of $k$-varieties (see
\S \ref{sect_RunningConventions} for details).

\begin{theorem}
Suppose $k$ is a perfect field. If $k$ has positive characteristic $p>0$, we
assume that $\frac{1}{p}\in A$. We construct a weakly $W$-exact functor
$F=(F_{!},F^{!},F^{w})$ from $\mathcal{V}_{k}$ to the Waldhausen DG category
$\mathcal{DM}_{gm,t}^{eff}(k;A)$. 
{In particular, it
induces a map of spectra%
\[
K(F)\colon K(\mathcal{V}_{k})\longrightarrow K(\mathcal{DM}_{gm,t}%
^{eff}(k;A))\text{.}%
\]}
Moreover, for any variety $X\in
\mathcal{V}_{k}$ we have%
\[
F(X)=M^{c}(X)
\]
in the homotopy category $\mathrm{DM}_{gm,Nis}^{eff}(k;A)$, i.e. $F(X)$
represents the motive with compact support attached to $X$. 
\end{theorem}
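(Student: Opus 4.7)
The plan is to model $M^{c}(X)$ by an explicit chain-level object that is strictly functorial for the two classes of morphisms present in the $SW$-category $\mathcal{V}_{k}$. For every variety $X$ I would set $F(X) := L^{c}(X)$, the Suslin--Friedlander complex of relative equidimensional cycles of dimension zero over smooth test schemes. This is a presheaf with transfers whose associated Nisnevich (resp.\ \'etale) sheaf is a classical DG-model of $M^{c}(X)$ in $\mathcal{DM}^{eff}_{gm,t}(k;A)$; working with cycles rather than with an abstract representative is what will make the construction rigid enough to assemble into a functor out of $\mathcal{V}_{k}$ without $\infty$-categorical bookkeeping.

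For the three components of the functor I would use the functorialities already present on cycles. For a proper morphism $f\colon X\to Y$, cycle-theoretic proper pushforward gives a strictly functorial map $f_{*}\colon L^{c}(X)\to L^{c}(Y)$, and I take this to be $F_{!}$. For an open immersion $j\colon U\hookrightarrow X$, intersecting cycles with the open part gives a strict map $j^{*}\colon L^{c}(X)\to L^{c}(U)$, and I take this to be $F^{!}$. The component $F^{w}$ sends the weak equivalences of $\mathcal{V}_{k}$ (isomorphisms, and whatever additional abstract blow-ups the $SW$-structure contains) to quasi-isomorphisms; for honest isomorphisms this is tautological, and for the remaining ones it follows from $h$-descent, which is available after inverting $p$.

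The bulk of the work is the verification of the weak $W$-exactness axioms. These reduce, after unwinding, to showing that for a closed--open decomposition $Z\xrightarrow{i} X\xleftarrow{j} U$ the sequence
\[
L^{c}(Z)\longrightarrow L^{c}(X)\longrightarrow L^{c}(U)
\]
is a cofiber sequence in the Waldhausen DG category, that is, that the induced map from the cone of $i_{*}$ to $L^{c}(U)$ is a quasi-isomorphism. On the homotopy level this is the localization triangle for motives with compact support; the identification $F(X)\cong M^{c}(X)$ in $\mathrm{DM}^{eff}_{gm,Nis}(k;A)$ asserted at the end of the theorem is then immediate from the defining property of the Suslin--Friedlander model. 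Compatibility with products and with the remaining $SW$-structure reduces to elementary cycle-theoretic checks which I would handle only after the localization triangle is in place.

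The main obstacle is exactly this localization step at the chain level: the naive presheaf quotient $L^{c}(X)/i_{*}L^{c}(Z)$ is only quasi-isomorphic to $L^{c}(U)$ after sheafification, and the needed acyclicity is precisely where the perfectness hypothesis on $k$ and the hypothesis $\tfrac{1}{p}\in A$ enter (via the localization theorems of Voevodsky in characteristic $0$ and their extensions due to Kelly and Suslin to positive characteristic after inverting $p$, which in turn rest on resolution of singularities or de Jong alterations). Once the localization triangle is realized strictly enough to produce a cofiber sequence of DG sheaves with transfers, weak $W$-exactness of $F=(F_{!},F^{!},F^{w})$ and the induced map $K(F)$ of $K$-theory spectra follow formally.
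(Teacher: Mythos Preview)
Your proposal is correct and essentially identical to the paper's proof: the paper also takes $F(X)=C_{\ast}z_{equi}(X,0)$ (your $L^{c}(X)$), uses proper pushforward for $F_{!}$, open pullback for $F^{!}$, and identifies the localization triangle for $M^{c}$ (via Friedlander--Voevodsky, or Kelly when $p>0$) as the only nontrivial axiom. One small correction: the weak equivalences in $\mathcal{V}_{k}$ are \emph{only} the $k$-isomorphisms (no abstract blow-ups), so $F^{w}$ is tautological and no appeal to $h$-descent is needed there; the paper also records a base-change axiom (axiom~(5)) for mixed closed/open squares, which is the Suslin--Voevodsky compatibility you allude to under ``elementary cycle-theoretic checks.''
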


See Theorem \ref{thm_MixMotRealization}. This result does not rely on the resolution of singularities by relying on Kelly's techniques in positive characteristic.

The above motivic realization leads to the following very
general mechanism to produce realizations. { For every DG enhanced \emph{realization functor} (or quasi-functor) 
$$\mathcal{DM}_{gm,t}^{eff}(k;A) \to \mathcal{D}$$
to a DG category $\mathcal{D}$, we obtain a realization map
$$K(\mathcal{V}_{k}) \to K(\mathcal{DM}_{gm,t}^{eff}(k;A)) \to K(\mathcal{D})\text{.}$$
The DG category $\mathcal{DM}_{gm,t}^{eff}(k;A)$ possesses a universal property, to be described below, which simplifies the construction of realization functors.}
Suppose $k$ is a perfect field
which admits resolution of singularities and $A$ as in the previous theorem.
Suppose $\mathcal{D}$ is a triangulated cocomplete and compactly
genererated $A$-linear DG category. Suppose%
\[
\mathcal{D}_{finite}\subseteq\mathcal{D}%
\]
is a suitable subcategory (we list precise conditions in the main body of the
paper). Following Vologodsky, we write $\mathcal{T}^{h,\Delta}$ for
DG\ quasi-functors on smooth $k$-varieties (with finite correspondences with
$A$-coefficients as morphisms) which satisfy $\mathbb{A}^{1}$-invariance and
$h$-descent.

\begin{theorem}
Suppose $k$ is a perfect field for which we have resolution of singularities
and of finite cohomological dimension with respect to $A$-coefficients in the
sense of Equation \ref{l_Cond_GaloisCohomDim}. Suppose we are given a DG
quasi-functor%
\[
\phi\in\mathcal{T}^{h,\Delta}(A[Sm_{k}],\mathcal{D}_{finite})\text{.}%
\]
Then there is a map of spectra%
\[
\operatorname*{R}\nolimits^{\phi}\colon K(\mathcal{V}_{k})\longrightarrow
K(\mathcal{D}_{finite})
\]
such that the following hold:

\begin{enumerate}
\item Suppose $X$ is a smooth proper $k$-variety. In $K_{0}$ we get%
\[
\operatorname*{R}\nolimits^{\phi}([X])=[\phi(X)]
\]
and if $f\colon X\overset{\sim}{\rightarrow}X$ is any automorphism, we get in
$K_{1}$ that%
\[
\operatorname*{R}\nolimits^{\phi}([f\colon X\overset{\sim}{\rightarrow
}X])=\left[  \phi(f)\colon\phi(X)\overset{\sim}{\rightarrow}\phi(X)\right]
\text{.}%
\]

\item Suppose $X$ is a smooth $k$-variety, $\overline{X}$ a smooth
compactification with a smooth closed subvariety $Z\subseteq\overline{X}$ such
that $X=\overline{X}\setminus Z$. Then in $K_{0}$,%
\[
\operatorname*{R}\nolimits^{\phi}([X])=[\phi(\overline{X})]-[\phi(Z)]
\]
and if one can extend the automorphisms such that%
\[
\xymatrix{
Z \ar[d]_{f} \ar@{^{(}->}[r] & \overline{X} \ar[d]^{f} \\
Z \ar@{^{(}->}[r] & \overline{X}
}
\]
commutes, then $\operatorname*{R}\nolimits^{\phi}([f\colon X\overset{\sim
}{\rightarrow}X])=\operatorname*{R}\nolimits^{\phi}([f\colon\overline
{X}\overset{\sim}{\rightarrow}\overline{X}])-\operatorname*{R}\nolimits^{\phi
}([f\mid_{Z}\colon Z\overset{\sim}{\rightarrow}Z])$.
\end{enumerate}
\end{theorem}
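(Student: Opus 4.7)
The plan is to factor $R^{\phi}$ through the motivic realization $K(F)$ of the previous theorem. By the universal property of $\mathcal{DM}_{gm,h}^{eff}(k;A)$ as the $\mathbb{A}^{1}$-localization and $h$-sheafification of smooth varieties with finite $A$-correspondences, the quasi-functor $\phi\in\mathcal{T}^{h,\Delta}(A[Sm_{k}],\mathcal{D}_{finite})$ extends to a DG quasi-functor
\[
\widetilde{\phi}\colon \mathcal{DM}_{gm,h}^{eff}(k;A)\longrightarrow \mathcal{D}_{finite}
\]
whose restriction along the tautological map from smooth varieties recovers $\phi$. Under the hypotheses on $k$ and on $A$, resolution of singularities gives the comparison $\mathcal{DM}_{gm,h}^{eff}(k;A)\simeq \mathcal{DM}_{gm,Nis}^{eff}(k;A)$, so one may set
\[
R^{\phi}:=K(\widetilde{\phi})\circ K(F)\colon K(\mathcal{V}_{k})\longrightarrow K(\mathcal{D}_{finite}).
\]

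Next I verify the explicit identities using that $F(X)=M^{c}(X)$ in the homotopy category. For item (1), smoothness and properness of $X$ yield $M^{c}(X)=M(X)$, and $\widetilde{\phi}(M(X))=\phi(X)$ by the universal extension property; the $K_{0}$ formula follows at once, and the $K_{1}$ formula results from applying the same identification to the automorphism $f$, whose class in $K_{1}$ is transported functorially through the $S_{\bullet}$-construction underlying Campbell's model. For item (2), apply $\widetilde{\phi}$ to the Gysin/localization triangle
\[
M^{c}(Z)\longrightarrow M^{c}(\overline{X})\longrightarrow M^{c}(X)\longrightarrow M^{c}(Z)[1]
\]
available in $\mathcal{DM}_{gm,Nis}^{eff}(k;A)$ for smooth $\overline{X}$ and smooth closed $Z$; since $\overline{X}$ and $Z$ are smooth and proper, $M^{c}(\overline{X})=M(\overline{X})$ and $M^{c}(Z)=M(Z)$, and $\widetilde{\phi}$ produces a distinguished triangle in $\mathcal{D}_{finite}$ whose Euler characteristic in $K_{0}$ yields $R^{\phi}([X])=[\phi(\overline{X})]-[\phi(Z)]$. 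Equivalently, the scissor relation $[X]=[\overline{X}]-[Z]$ in $K_{0}(\mathcal{V}_{k})$, together with additivity of $R^{\phi}$ and item (1), reduces the claim to the smooth proper case. The $K_{1}$ statement follows because the commuting square in the hypothesis lifts the localization triangle to a triangle of automorphisms, and $\widetilde{\phi}$ preserves this structure.

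The main obstacle is the rigorous construction of $\widetilde{\phi}$ from $\phi$ in the DG-enhanced (rather than merely triangulated) setting, together with the verification that $\widetilde{\phi}$ takes values in $\mathcal{D}_{finite}$ and is compatible with the Waldhausen structure so that $K(\widetilde{\phi})$ is a well-defined map of spectra. This requires a Vologodsky-style DG $h$-sheafification and $\mathbb{A}^{1}$-localization of $A[Sm_{k}]$, the identification $\mathcal{DM}_{gm,h}^{eff}\simeq \mathcal{DM}_{gm,Nis}^{eff}$ under resolution of singularities with $1/p\in A$, and a compact-generation/finiteness argument ensuring that the motives $M^{c}(X)$ are sent into the chosen small subcategory $\mathcal{D}_{finite}$. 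Once these foundations are in place, the $K_{0}$ and $K_{1}$ identities reduce to routine manipulations of distinguished triangles and functoriality of the $S_{\bullet}$-construction.
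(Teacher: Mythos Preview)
Your proposal is correct and follows essentially the same route as the paper: factor through the motivic realization $K(F)$, extend $\phi$ to geometric motives via Vologodsky's universal property, and deduce the $K_0$ and $K_1$ identities from $M^{c}(X)\simeq M(X)$ for smooth proper $X$ together with the localization triangle. Two small points of divergence are worth noting. First, the paper routes the extension through \'{e}tale motives (this is how Vologodsky's theorem is stated) and then precomposes with the change-of-topology quasi-functor $\mathcal{DM}_{gm,Nis}^{eff}\to\mathcal{DM}_{gm,\acute{e}t}^{eff}$, rather than invoking an $h$-version and a comparison $\mathcal{DM}_{gm,h}^{eff}\simeq\mathcal{DM}_{gm,Nis}^{eff}$ as you do. Second, the paper does not claim that $\widetilde{\phi}$ lands in $\mathcal{D}_{finite}$ directly: Vologodsky's theorem only yields $\widetilde{\phi}\colon\mathcal{DM}_{\acute{e}t}^{eff}(k;A)\to\mathcal{D}$, and one then argues that on the compactly generated subcategory of geometric motives (generated by the $M(X)$, whose images are $\phi(X)\in\mathcal{D}_{finite}$) the restriction factors through $\mathcal{D}_{finite}$. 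You correctly flag exactly this as the main obstacle in your final paragraph, so your outline and the paper's proof are in agreement.
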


See Theorem \ref{thm_GeneralRealizationTheorem}. Realizations like, for example, the

\begin{itemize}
\item Betti realization to finitely generated $A$-modules for $A$ some
Noetherian ring,

\item mixed Hodge realization,

\item $\ell$-adic \'{e}tale realizations,

\item or more broadly any mixed Weil cohomology theory in the sense of Cisinski and D\'{e}glise,
\end{itemize}

may be used as input for the above theorem, producing corresponding realizations for
$K(\mathcal{V}_{k})$.\

This is not the first construction of realizations for
$K(\mathcal{V}_{k})$. The Betti and $\ell$-adic realization were constructed by Campbell,
Wolfson and Zakharevich in \cite{MR3993931} and we crucially use their device
of weakly $W$-exact functors. That we add a mixed Hodge realization solves \textit{Problem 7.4} loc. cit. and, at least for fields admitting resolution of singularities, also confirms the expectation mentioned loc. cit. that other Weil cohomology theories admit realizations through the Cisinski--D\'{e}glise device.

The idea to extend realization functors uniquely to motives by demanding
them to exist on smooth varieties and have suitable descent and $\mathbb{A}%
^{1}$-invariance properties can be found in work of Vologodsky
\cite{MR3004172} and Robalo {\cite{robalo}}. These ideas have a long series of predecessors constructing
various realizations in the setting of mixed motives, among others by Huber
\cite{MR1439046, MR1775312} or Ivorra \cite{MR2377240, MR3518311}.
For the purposes of this paper, we follow the framework of Vologodsky. Vologodsky at times assumes $\operatorname{char}(k)=0$, but as long as we use coefficients with $p$ invertible once $p>0$, he really only demands this to be able to use resolution of singularities.

Regarding mixed motives, we will follow Voevodsky's theory, most of what we need is explained in the book \cite{MR2242284}, with three
noteworthy exceptions:

\begin{itemize}
\item Not all necessary theorems are actually proven in the book. However, in
these cases we just refer to the original literature. This is unproblematic.

\item In characteristic $p>0$ we would like to use results which, in
Voevodsky's theory, are only available under the assumption that resolution of
singularities will be established also in positive characteristic. Should this
ever occur, the better. However, since at present this is not available, we
instead use Kelly's work based on alterations \cite{MR3673293}. This comes
with the price of having to invert $p$.

\item The original literature as well as the book \cite{MR2242284} only
develop mixed motives on the level of triangulated categories. However, we
crucially need a DG\ enhancement. This has been worked out by Beilinson and
Vologodsky in \cite{MR2399083}.
\end{itemize}

\begin{acknowledgement}
The first author thanks Brad Drew. The third author would like to thank his advisor, Jesse Wolfson, for insightful discussions and suggestions.
\end{acknowledgement}

\subsection{Running conventions\label{sect_RunningConventions}}

The word \emph{scheme} will always refer to a separated scheme over a field.
We stress this because both sources \cite{MR1764199} and \cite{MR2242284} also
use such assumptions tacitly (they both announce early on that all their
schemes will satisfy conditions which are implied by the above).

By a $k$-\emph{variety} we mean a reduced finite type (separated) scheme over
the field $k$. This is in line with the usage in \cite[Def. 2.1]{MR3955537},
specialized to a base field. Morphisms between $k$-varieties will always be
assumed to be $k$-morphisms.

For complexes we use the terms \emph{bounded below/above} with reference to
homological indexing, e.g. $\cdots\rightarrow C_{1}\rightarrow
C_{0}\rightarrow0$ is bounded above, even though the indices themselves are
bounded from below. This is the same convention as is used in \cite{MR2242284}.

\section{Realization to Mixed Motives\label{sect_RealizationToMixedMotives}}

\subsection{Introduction}

We set up a map%
\[
K(\mathcal{V}_{k})\longrightarrow K(\mathrm{C}^{gm})\text{,}%
\]
where $\mathrm{C}^{gm}$ is the Waldhausen category of geometric effective
mixed motives (we give a precise definition later). The key difficulty is to
attach to each smooth variety $X$ in $\mathcal{V}_{k}$ a complex which is
strictly functorial in both closed immersions as well as open immersions. To
this end, we use the $z_{equi}(X,0)$ cycle sheaves. They are a concrete
representative of the compactly supported motive $M^{c}(X)$.

\subsection{Preliminary remarks on DG\ categories}\label{prelim}

{ For a DG category $\mathcal{A}$ we will denote by $\mathrm{mod}(\mathcal{A}^{\rm op})$ the DG category of \emph{right} DG $\mathcal{A}$-modules. The full subcategory of compact objects (or perfect DG modules) will be denoted by $\mathrm{perf}(\mathcal{A}^{\rm op})$. There is a Yoneda-style quasi-embedding
\begin{equation}\label{eqn:yoneda}
\mathcal{A} \to \mathrm{perf}(\mathcal{A}^{\rm op}).
\end{equation}
A DG category is called \emph{triangulated} if the functor \eqref{eqn:yoneda} is a quasi-equivalence. The latter implies that
$$Ho(\mathcal{A}) \to Ho(\mathrm{perf}(\mathcal{A}^{\rm op}))$$
is an equivalence of categories.
\begin{remark}\label{convention}
By abuse of language we will denote $X \in \mathcal{A}$ and the associated DG module in $\mathrm{perf}(\mathcal{A}^{\rm op})$ by $X$. Similarly, we will at times gloss over the difference between a triangulated DG category $\mathcal{A}$ and $\mathrm{perf}(\mathcal{A}^{\rm op})$.
\end{remark}
}

{The content of the following proposition is discussed at the beginning of \cite[Section 3]{toen}.

\begin{proposition}
\label{prop_ModelStruct}%
Let $\mathcal{A}$ be a DG category. There is a cofibrantly generated model structure on $\mathrm{mod}(\mathcal{A}^{\rm op})$, such that a morphism 
$$F \to G$$
of right DG modules is a weak equivalence (resp. a fibration) if and only if for every $X \in \mathcal{A}$ the induced morphism of complexes
$$F(X) \to G(X)$$
is a weak equivalence (respectively a fibration).
\end{proposition}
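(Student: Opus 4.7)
The proposition records a standard transfer-of-model-structure result, and the plan is to apply Kan's recognition theorem for cofibrantly generated model categories, transferring the projective model structure on complexes along the family of evaluation functors $\mathrm{ev}_X\colon \mathrm{mod}(\mathcal{A}^{\rm op}) \to \mathrm{Ch}$ indexed by $X \in \mathcal{A}$. This is the approach sketched at the beginning of \cite[\S 3]{toen}, and the body of the proof is essentially a bookkeeping exercise verifying Kan's hypotheses.

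First I would write down explicit generators. Let $I_{\mathrm{Ch}}$ and $J_{\mathrm{Ch}}$ denote the usual generating cofibrations and generating trivial cofibrations of the projective model structure on chain complexes (inclusions of shifted sphere complexes into disk complexes, respectively inclusions $0 \hookrightarrow D^n$). For each $X \in \mathcal{A}$ let $h_X := \mathcal{A}(-,X) \in \mathrm{mod}(\mathcal{A}^{\rm op})$ be the representable DG module, and set
$$
I := \{\, i \otimes h_X \mid i \in I_{\mathrm{Ch}},\ X \in \mathcal{A}\,\}, \qquad J := \{\, j \otimes h_X \mid j \in J_{\mathrm{Ch}},\ X \in \mathcal{A}\,\}.
$$
By the enriched Yoneda lemma, for any right DG module $F$ and any complex $C$ there is a natural isomorphism
$$
\mathrm{Hom}_{\mathrm{mod}(\mathcal{A}^{\rm op})}(C \otimes h_X, F) \cong \mathrm{Hom}_{\mathrm{Ch}}(C, F(X)).
$$
This immediately identifies the right-lifting class of $I$ with the pointwise trivial fibrations, and the right-lifting class of $J$ with the pointwise fibrations, matching the classes the proposition prescribes.

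Next I would verify the hypotheses of Kan's recognition theorem. Smallness of the domains of maps in $I$ and $J$ follows from the corresponding smallness in $\mathrm{Ch}$ via the adjunction above, since $\mathrm{ev}_X$ preserves filtered colimits. Every transfinite composition of pushouts of maps in $J$ evaluates pointwise to a transfinite composition of pushouts of acyclic cofibrations in $\mathrm{Ch}$, hence is a pointwise weak equivalence. Finally, the class of $I$-injectives equals the intersection of the $J$-injectives with the pointwise weak equivalences, which again reduces via the adjunction to the analogous statement in $\mathrm{Ch}$. Kan's theorem then produces a cofibrantly generated model structure whose fibrations and weak equivalences are precisely the pointwise ones. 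The only mild technical point is smallness, but it is harmless here: as for ordinary chain complexes every object is small with respect to the whole category, so the small object argument applies without obstruction.
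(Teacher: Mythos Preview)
Your proposal is correct and in fact supplies more than the paper does: the paper gives no proof of this proposition at all, but simply points to the beginning of \cite[\S 3]{toen} for the content. Your argument via Kan's recognition theorem, transferring the projective model structure on chain complexes along the evaluation functors, is exactly the standard construction underlying that reference, and you even cite the same source; so there is no divergence in approach, only in the level of detail you provide.
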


We therefore have the structure of a cofibrantly generated model category on $$\mathrm{mod}(\mathcal{A}).$$ This structure will play an important yet transient role in Subsection \ref{functorial}.

Following \cite[Subsection 5.2(a)]{toen} we define the Waldhausen category associated to a DG category $\mathcal{A}$ to be
$$\mathrm{perf}_{\rm co}(\mathcal{A}^{\rm op}),$$
i.e. the full subcategory of cofibrant and compact right DG modules. The aforementioned model category structure endows $\mathcal{A}$ with the requisite class of cofibrations and weak equivalences. The algebraic $K$-theory space $K(\mathcal{A})$ is defined to be the Waldhausen $K$-theory of $\mathrm{perf}_{\rm co}(\mathcal{A}^{\rm op})$. The embedding \eqref{eqn:yoneda} factors through $\mathrm{perf}_{\rm co}(\mathcal{A}^{\rm op})$ (\cite[p. 630]{toen}).
}

We shall use calligraphic letters, as in $\mathcal{C}$, for a DG\ category and
roman letters, as in $\mathrm{C}$, to denote { classical (e.g. triangulated)} categories. In
particular, if $\mathcal{C}$ is a triangulated DG\ category, we can simply
write%
\[
\mathrm{C}:=Ho(\mathcal{C})
\]
for its homotopy category. This is compatible with the notation in
\cite{MR3004172}.

\subsection{Recollections}

Let $A$ be a commutative unital ring. It will serve as our ring of
coefficients, e.g. it could be $A:=\mathbb{Z}$.

We fix a perfect field $k$ such that there exists some $N$ such that for all
$r>N$ we have%
\begin{equation}
H^{r}(\operatorname*{Gal}(k^{\operatorname*{sep}}%
/k),M)=0\label{l_Cond_GaloisCohomDim}%
\end{equation}
for all $A$-modules $M$.

\begin{example}
\label{example_CohomDim}This condition is usually harmless.

\begin{enumerate}
\item If $k$ is separably closed, this condition is automatically satisfied,
so $k=\mathbb{C}$ is fine.

\item If $k$ has finite strict cohomological dimension (in the classical sense
of Galois cohomology), the condition is satisfied.

\item In particular, the condition is met if $k$ is a finite field.

\item For {$A=\mathbb{Z}$} and $k=\mathbb{R}$ the condition is \emph{not}
satisfied because, in positive degrees, the cohomology is periodic of period
$2$.
\end{enumerate}
\end{example}

Let $Sm_{k}$ be the category of smooth separated $k$-varieties and
$k$-morphisms (this is the notation of Beilinson and Vologodsky
\cite{MR2399083}, \cite{MR3004172}; the book \cite{MR2242284} uses the
marginally different notation $Sm/k$).

Next, $A_{tr}[Sm_{k}]$ denotes the category of finite correspondences over
$k$. It has the same objects as $Sm_{k}$, but instead of genuine
$k$-morphisms, we consider finite correspondences with coefficients in the
ring $A$ (this is the notation of Beilinson and Vologodsky; the book
\cite{MR2242284} has $A=\mathbb{Z}$ instead and denotes the same category by
$Cor_{k}$).

\subsection{DG\ category of mixed motives}

Let $t\in\{\acute{e}t,Nis\}$ be either the \'{e}tale or the Nisnevich topology.
Next, one sets up a triangulated DG\ category of effective mixed motives
$\mathcal{DM}_{t}^{eff}(k;A)$ over the base field $k$ and with coefficients in
$A$. For $t=\acute{e}t$ this is described in \cite[\S 2.2, p. 378, last
paragraph]{MR3004172}, and for $t=Nis$ in \cite[Remark 2.6]{MR3004172} and
there is a DG quasi-functor%
\begin{equation}
\mathcal{DM}_{Nis}^{eff}(k;A)\longrightarrow\mathcal{DM}_{\acute{e}t}%
^{eff}(k;A)\text{,} \label{lzz1}%
\end{equation}
also described loc. cit. which corresponds to enforcing \'{e}tale descent as
opposed to the weaker Nisnevich descent. Following the aforementioned
convention, the homotopy categories are denoted by $\mathrm{DM}_{t}%
^{eff}(k;A)$ {and we get an induced triangulated functor}
\begin{equation}
\mathrm{DM}_{Nis}^{eff}(k;A)\longrightarrow\mathrm{DM}_{\acute{e}t}%
^{eff}(k;A)\label{lzz2}%
\end{equation}
{of triangulated categories.}
If $A$ is a $\mathbb{Q}$-algebra, Equation \eqref{lzz1} (and therefore Equation
\eqref{lzz2}) are equivalences.

\begin{example}
As Chow groups do not satisfy \'{e}tale descent, Chow groups are representable
in $\mathcal{DM}_{Nis}^{eff}(k;A)$, but not in $\mathcal{DM}_{\acute{e}%
t}^{eff}(k;A)$.
\end{example}

Next, one defines the DG\ category of \emph{geometric mixed motives}%
\begin{equation}
\mathcal{DM}_{gm,t}^{eff}(k;A):=\mathcal{DM}_{t}^{eff}(k;A)^{perf}
\label{lvv1}%
\end{equation}
as the full DG\ subcategory of $\mathcal{DM}_{t}^{eff}(k;A)$ such that the
objects are compact in the homotopy category $\mathrm{DM}_{t}^{eff}%
(k;A)=Ho(\mathcal{DM}_{t}^{eff}(k;A))$. As a shorthand for later use, we
define 
{
\begin{align}
\mathcal{C}  &  := \mathrm{perf}_{\rm co}(\mathcal{DM}_{Nis}^{eff}(k;A)^{\rm op})\text{,}\nonumber\\
\mathcal{C}^{gm}  &  :=\mathrm{perf}_{\rm co}(\mathcal{DM}_{gm,Nis}^{eff}(k;A)^{\rm op})\text{,} \label{lgm1}%
\end{align}
Since the functor \eqref{eqn:yoneda} induces an equivalence of homotopy categories for triangulated DG categories, the categories above are DG enhancements for the triangulated}
categories%
\begin{align*}
\mathrm{C}=  &  \mathrm{DM}_{Nis}^{eff}(k;A)\text{,}\\
\mathrm{C}^{gm}=  &  \mathrm{DM}_{gm,Nis}^{eff}(k;A)\text{.}%
\end{align*}
These are the triangulated categories of (effective, resp. geometric)
Nisnevich mixed motives, and at least the latter is precisely the same as in
Voevodsky's original formalism. {In the case of the former there is a subtle but irrelevant difference since Beilinson--Vologodsky allow arbitrary unbounded complexes, whereas Voevodsky imposes a boundedness condition.}

\begin{remark}
[Compatibility with Voevodsky's formalism]\label{rmk_CompareVoevodsky}We
should explain the relationship to the category $\mathrm{C}^{gm}$ to
Voevodsky's original theory, as described in the book \cite{MR2242284}. We
pick $t:=Nis$. We have a triangulated equivalence%
\[
\mathbf{DM}_{gm}^{eff}(k,A)\cong\mathrm{DM}_{gm,Nis}^{eff}(k;A)\text{,}%
\]
where $\mathbf{DM}_{gm}^{eff}(k,A)$ refers to the boldface notational
conventions used in \cite[Lecture 14, Definition 14.1]{MR2242284}. In short:
The above consideration describe a DG enhancement of Voevodsky's original
category of geometric motives. Let us explain the comparison: In
\cite{MR2242284}, following Voevodsky's original works, one first sets up a
category of effective Nisnevich motives $\mathbf{DM}_{Nis}^{eff,-}(k,A)$ as
the $\mathbb{A}^{1}$-localization of the triangulated category of bounded
above complexes of Nisnevich sheaves {(note that this is called $\mathbb{Z}_{tr}[Sm_{k}]$ by Beilinson--Vologodsky)}%
\begin{equation}
\mathbf{DM}_{Nis}^{eff,-}(k,A)=D^{-}\left(  Sh_{Nis}(Cor_{k},A)\right)
[W_{\mathbb{A}}^{-1}]\text{,} \label{lkj1}%
\end{equation}
where $W_{\mathbb{A}}$ denotes the class of $\mathbb{A}^{1}$-equivalences
$X\rightarrow X\times\mathbb{A}^{1}$. This category is \emph{genuinely
different} from the triangulated category $\mathrm{DM}_{Nis}^{eff}(k;A)$ above
from the Beilinson--Vologodsky setting. However, the difference is merely that%
\[
\mathbf{DM}_{Nis}^{eff,-}(k,A)\longrightarrow\mathrm{DM}_{Nis}^{eff}(k;A)
\]
is essentially the inclusion of the bounded above complexes into all unbounded
complexes (see \cite[Remark 2.6 (and below)]{MR3004172} or the introductory
explanations in \cite[\S 2, before \S 2.1]{MR2399083}). So these categories
are actually different. Then the book \cite{MR2242284} defines%
\[
\mathbf{DM}_{gm}^{eff}(k,A)\subset\mathbf{DM}_{Nis}^{eff,-}(k,A)
\]
as the thick triangulated subcategory generated by motives
$M(X):=L_{\mathbb{A}^{1}}(A_{tr}(X))$ of smooth $k$-varieties $X$ (so
$A_{tr}(X)=\mathbb{Z}_{tr}(X)$ in the notation of \cite[Definition
2.8]{MR2242284} for $A=\mathbb{Z}$). However, one can alternatively
characterize $\mathbf{DM}_{gm}^{eff}(k,A)$ as the compact objects inside the
triangulated category $\mathbf{DM}_{Nis}^{eff,-}(k,A)$, see \cite[Theorem 6.2,
applied as in Example 6.3, for $S:=k$]{MR2529161}. Hence, the definition in
Equation \ref{lvv1} yields a DG enhancement of the same triangulated category
as is Voevodsky's original $\mathbf{DM}_{gm}^{eff}(k,A)$, i.e.%
\[
\mathbf{DM}_{gm}^{eff}(k,A)\simeq\mathrm{C}^{gm}\text{.}%
\]
On the other hand, $\mathrm{C}$ is truly bigger than $\mathbf{DM}%
_{Nis}^{eff,-}(k,A)$ because it also contains the complexes which are not
bounded from above.
\end{remark}


\subsection{{Functorial factorizations in $\mathcal{C}^{gm}$}}\label{functorial}

{
Recall from Subsection \ref{prelim} that we may regard $\mathcal{C}^{gm}$ as a Waldhausen category such that its weak
equivalences are the quasi-isomorphisms of the DG structure and its
cofibrations are the cofibrations of Proposition \ref{prop_ModelStruct}.
}

%

\begin{remark}
\label{rmk_obvious}We record the following, essentially obvious, facts.

\begin{enumerate}
\item Any square in $\mathcal{C}$%
\[
\xymatrix{
A \ar[r] \ar[d] & B \ar[d] \\
C \ar[r] & D
}
\]
such that the induced triangle $A\longrightarrow B\oplus C\longrightarrow
D\longrightarrow A[1]$ is distinguished in the triangulated category
$\mathrm{C}$ is weakly equivalent by zig-zags to a genuinely Cartensian square
of complexes in $\mathcal{C}$.

\item $Ho_{(\operatorname*{Waldhausen})}(\mathcal{C})\simeq\mathrm{C}$ as
triangulated categories.
\end{enumerate}
\end{remark}

%

\begin{lemma}
\label{lemma_FFWC}$\mathcal{C}^{gm}$ has functorial factorization of weak
cofibrations (FFWC).
\end{lemma}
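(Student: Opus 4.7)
\bigskip

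The plan is to deduce FFWC as a straightforward consequence of the cofibrantly generated model structure on $\mathrm{mod}(\mathcal{DM}_{gm,Nis}^{eff}(k;A)^{\rm op})$ provided by Proposition \ref{prop_ModelStruct}. In any cofibrantly generated model category, Quillen's small object argument, applied to the generating (trivial) cofibrations, yields a \emph{functorial} factorization of any morphism $f\colon X\to Y$ as
\[
X\xrightarrow{\; i\;} Z \xrightarrow{\; p \;} Y,
\]
with $i$ a cofibration and $p$ a trivial fibration (and symmetrically for the trivial cofibration/fibration factorization). This factorization is natural in $f$ in the sense that it assembles into a functor on the arrow category, which is precisely what is needed to upgrade Waldhausen's mapping cylinder constructions into functorial ones.

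First I would make precise what must be verified: if $f$ is a morphism between two objects of $\mathcal{C}^{gm}=\mathrm{perf}_{\rm co}(\mathcal{DM}_{gm,Nis}^{eff}(k;A)^{\rm op})$, then both $Z$ and $p$ stay inside $\mathcal{C}^{gm}$. Cofibrancy of $Z$ is immediate: $X$ is cofibrant by assumption and $i$ is a cofibration, and cofibrations are closed under pushout from the initial object, hence $\varnothing\to Z$ is a cofibration. Compactness (perfectness) of $Z$ follows from the fact that $p\colon Z\to Y$ is a weak equivalence into a compact object; since compactness in $\mathrm{perf}(\mathcal{DM}_{gm,Nis}^{eff}(k;A)^{\rm op})$ is a property of the quasi-isomorphism class, we conclude $Z\in \mathcal{C}^{gm}$. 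The weak cofibration that we wanted to factor is now presented, functorially, as a genuine cofibration followed by a weak equivalence.

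The hard part of the argument, to the extent that there is one, is ensuring that the small-object-argument factorization in the ambient model category $\mathrm{mod}(\mathcal{DM}_{gm,Nis}^{eff}(k;A)^{\rm op})$ does not force us out of the geometric subcategory. This is precisely what the compactness check above addresses: the intermediate object $Z$ is a priori only known to live in $\mathrm{mod}$, but the weak equivalence $Z\simeq Y$ pulls it back into the thick subcategory of compact objects. Functoriality is inherited from the small object argument itself, and the construction is compatible with the Waldhausen structure on $\mathcal{C}^{gm}$ described at the start of Subsection \ref{functorial}, completing the proof.
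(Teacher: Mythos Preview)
Your proposal is correct and follows essentially the same approach as the paper: apply the small object argument in the ambient cofibrantly generated model category $\mathrm{mod}(\mathcal{DM}_{gm,Nis}^{eff}(k;A)^{\rm op})$ to obtain a functorial cofibration/trivial-fibration factorization, and then verify that the middle object stays in $\mathcal{C}^{gm}$ by observing that it is cofibrant (as the target of a cofibration with cofibrant source) and perfect (as the source of a weak equivalence to a perfect object). One small expository remark: your phrase ``cofibrations are closed under pushout from the initial object'' is not the relevant fact here; what you actually use is that cofibrations are closed under \emph{composition}, so $\varnothing\to X\to Z$ is a cofibration.
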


\begin{proof}
First, we consider $\mathcal{C}$. We use that the Waldhausen structure comes
from a cofibrantly generated model structure (Prop. \ref{prop_ModelStruct}). { In particular, by the small object argument, we have functorial factorizations in this model category.

Its functorial
factorizations into cofibrations followed by acyclic fibrations can be written
as a functor%
\[
\varphi\colon\operatorname*{Fun}([1],\mathcal{C})\longrightarrow
\operatorname*{Fun}\nolimits^{c,w}([2],\mathcal{C})\text{,}%
\]
using the notation of \cite[Appendix A]{MR3993931} (the superscript $c,w$ just
means that the first arrow is a cofibration and the second a weak
equivalence). For FFWC we only need functorial factorizations for weak
cofibrations, i.e. those which are through a zig-zag equivalent to a genuine
cofibration), but $\varphi$ solves the problem even for arbitrary maps. Hence, $$\mod(\mathcal{A}^{\rm op})$$ has FFWC where $\mathcal{A}$ denotes a DG category.

The mere restriction of $\varphi$ to functors with values in
$\mathcal{C}^{gm}$ solves the problem and gives%
\[
\varphi^{\prime}\colon\operatorname*{Fun}([1],\mathcal{C}^{gm})\longrightarrow
\operatorname*{Fun}\nolimits^{c,w}([2],\mathcal{C}^{gm})
\]
For any factorization in the model category of right DG modules made by the above $\varphi$, but
with $A,B\in\mathcal{C}^{gm}$,
\[
A\overset{i}{\hookrightarrow}T\overset{p}{\longrightarrow}B
\]
we get $T\in\mathcal{C}^{gm}$ because $A$ is cofibrant and $A \hookrightarrow T$ is a cofibration, and thus
$$0 \hookrightarrow A \hookrightarrow T$$
is also a cofibration. This shows that $T$ is cofibrant. Perfectness of $T$ follows from the fact that $p$ is a weak equivalence (i.e. quasi-isomorphism), and the fact that $B$ is perfect by assumption.
}
\end{proof}

\subsection{Motivic realization functor}

For any scheme $T$ of finite type over $k$, there is the sheaf of
\emph{equidimensional cycles} $z_{equi}(T,0)\in Sh_{Nis}(Cor_{k})$,
\cite[Definition 16.1]{MR2242284}. { There are also sheaves
$z_{equi}(T,r)$ for $r\in\mathbb{Z}$, but we only need those for $r=0$ and
chose to keep the second parameter in order to remain fully compatible to the
literature.}

\begin{remark}
The notation $Sh_{Nis}(Cor_{k})$ is in line with the cited book
\cite{MR2242284}. As we shall later see (in the proof of Theorem
\ref{thm_MixMotRealization}) that these objects define geometric motives,
there is no problem to regard them as objects in the DG\ category
$\mathcal{C}^{gm}=\mathcal{DM}_{gm,Nis}^{eff}(k;A)$ of the
Beilinson--Vologodsky framework. Before having proven this, one may also
consider them as objects in the DG category of presheaves $PSh_{tr}^{A}%
(Sm_{k})$ considered in \cite[\S 2.2]{MR3004172}.
\end{remark}

We recall the construction. Suppose $S\in Sm_{k}$ and $Z\subseteq S\times
_{k}T$ is an irreducible closed subscheme. We introduce a condition:

\begin{description}
\item[$(\sharp)$] We say that $Z$ satisfies $(\sharp)$ if $Z$ is dominant over
some irreducible component of $S$, and moreover for each $s\in S$ the
scheme-theoretic fiber $Z_{s}=Z\times_{S}\kappa(s)$ is a finite $\kappa(s)$-scheme.
\end{description}

Property $(\sharp)$ is equivalent to demanding that the composed morphism%
\[
\xymatrix{
Z \ar@{^{(}->}[r] \ar[dr] & S \times_{k} T \ar[d] \\
& S
}
\]
is equidimensional of relative dimension zero in the sense of \cite[Def.
2.1.2]{MR1764199}.

For any $S\in Sm_{k}$ define
\[
z_{equi}(T,0)(S)=A\left[  Z\subseteq S\times_{k}T\mid Z\text{ satisfies
}(\sharp)\right]  \text{.}%
\]
The notation refers to the free abelian group having the $[Z]$ as an $A$-basis.

Then $S\mapsto z_{equi}(T,0)(S)$ is a Nisnevich sheaf with transfers, so we
have $z_{equi}(T,0)\in Sh_{Nis}(Cor_{k})$. We need the following maps:

\begin{enumerate}
\item Suppose $i:T^{\prime}\hookrightarrow T$ is a closed immersion. Then
there is an induced map%
\[
i_{\ast}:z_{equi}(T^{\prime},0)(S)\longrightarrow z_{equi}(T,0)(S)
\]
for every $S$ and thus a corresponding morphism of sheaves.\ Moreover, if
$i_{1},i_{2}$ are two composable closed immersions, we have%
\[
i_{2\ast}\circ i_{1\ast}=(i_{2}\circ i_{1})_{\ast}\text{.}%
\]

\item Suppose $j:T^{\prime\prime}\hookrightarrow T$ is an open immersion. Then
there is an induced map%
\[
j^{\ast}:z_{equi}(T,0)(S)\longrightarrow z_{equi}(T^{\prime\prime},0)(S)
\]
for every $S$ and thus a corresponding morphism of sheaves, and for composable
open immersions, we have%
\[
j_{1}^{\ast}\circ j_{2}^{\ast}=(j_{2}\circ j_{1})^{\ast}\text{.}%
\]

\end{enumerate}

All these properties are discussed briefly in \cite[Lec. 16]{MR2242284}. For
(1), the details can be found in \cite[Corollary 3.6.3]{MR1764199}, for
$z_{equi}$ and noting that closed immersions are of course proper. For (2), in
the paragraph before \cite[Prop. 3.6.5]{MR1764199}, using that open immersions
are of course flat.

\begin{remark}
It is perhaps worth to stress that while the sheaves $z_{equi}(T,-)$ are
sheaves on smooth schemes $Sm_{k}$, the scheme $T$ can be \emph{any} $k$-variety
in the sense of \S \ref{sect_RunningConventions}, no matter how singular. This
is important because smooth varieties do not form an $SW$-category
\cite[Remark 3.17]{MR3955537}.
\end{remark}

Now we define a weakly $W$-exact functor in the sense of \cite[Def.
2.17]{MR3993931}. Suppose $\mathcal{V}_{k}$ denotes the $SW$-category of
$k$-varieties (recall the convention on what this entails, see
\S \ref{sect_RunningConventions}).

This is properly defined in \cite[Corollary 3.16, $\mathbf{Var}_{/k}$%
]{MR3955537}, and we quickly recall that (1) the cofibrations $\mathbf{co}%
(\mathcal{V}_{k})$ are the closed immersions (indicated by the arrows style
$\hookrightarrow$), the complement maps $\mathbf{comp}(\mathcal{V}_{k})$ are
open immersions (indicated by the arrow style $\overset{\circ}{\longrightarrow
}$), and subtraction sequences are those of the shape%
\[
Z\hookrightarrow X\overset{\circ}{\longleftarrow}Y\text{,}%
\]
where $Y=X\setminus Z$. As varieties by the running conventions are required
to be reduced, note that $Z$ only appears with its reduced structure and no
nil-thickenings can appear within $\mathcal{V}_{k}$.

Now \cite[Corollary 3.16]{MR3955537} shows that $\mathcal{V}_{k}$ is a
subtractive category and this equips it with the structure of an $SW$-category
if we choose $k$-isomorphisms of $k$-varieties as the weak equivalences
$\mathbf{w}(\mathcal{V}_{k})$.

A weakly $W$-exact functor%
\[
F\colon\mathcal{V}_{k}\longrightarrow\mathcal{C}^{gm}%
\]
is given by a triple $(F_{!},F^{!},F^{w})$ of functors, which agree on objects
but differ on morphisms. The definition is given in \cite[Def. 2.17]%
{MR3993931}.

\begin{remark}
Just like formalization of mixed motives using the category $Cor_{k}$ encodes
the existence of a covariant functoriality alongside a contravariant
functoriality, the axiomatization of weakly $W$-exact functors just uses the
three functors $F_{!},F^{!},F^{w}$ to differentiate between various types of
co- and contravariant transfers.
\end{remark}

We recall that for any presheaf $\mathcal{F}$ of abelian groups, there is a
simplicial presheaf%
\[
(sC)_{n}(\mathcal{F})(X)=\mathcal{F}(X\times\Delta^{n})
\]
enforcing $\mathbb{A}^{1}$-homotopy invariance. A precise definition is given
in \cite[Def. 2.14]{MR2242284} or with more details \cite[\S 4]{MR1764201}. To
$sC_{\bullet}$ one also attaches a complex of presheaves, concentrated in
non-positive degrees, and call it $C_{\ast}$ (It is not really the same, but
quasi-isomorphic to the complex which the Dold--Kan correspondence would
attach to $sC_{\bullet}$. In particular, it is a non-negatively indexed
complex in homological indexing, i.e. it is bounded from \textit{above} in our
cohomological indexing). The construction is functorial in morphisms of
presheaves. We note that if $0\rightarrow\mathcal{F}^{\prime}\rightarrow
\mathcal{F}\twoheadrightarrow\mathcal{F}^{\prime\prime}\rightarrow0$ is an
exact sequence of presheaves, then%
\[
0\rightarrow C_{\ast}\mathcal{F}^{\prime}\rightarrow C_{\ast}\mathcal{F}%
\twoheadrightarrow C_{\ast}\mathcal{F}^{\prime\prime}\rightarrow0
\]
is an exact sequence of complexes. Moreover, if the input $\mathcal{F}$ is a
presheaf with transfers, so is $C_{\ast}\mathcal{F}$.

The main idea behind $C_{\ast}$ is that it can be regarded as the universal
construction enforcing $\mathbb{A}^{1}$-homotopy invariance on a sheaf,
\cite[Example 2.20]{MR2242284}.

We define%
\begin{align}
F_{?}\colon\mathcal{V}_{k}  &  \longrightarrow\mathcal{C}^{gm}\label{lwwwa7}\\
T  &  \longmapsto C_{\ast}z_{equi}(T,0)\nonumber
\end{align}
on objects, and $F_{!}\colon\mathbf{co}(\mathcal{V}_{k})\rightarrow
\mathcal{C}^{gm}$ is just sending closed immersions $i$ to $i_{\ast}$ of
$z_{equi}$, and $F^{!}\colon\mathbf{comp}(\mathcal{V}_{k})\rightarrow
\mathcal{C}^{gm}$ sends open immersions $j$ to $j^{\ast}$ of $z_{equi}$ (as
$i_{\ast}$, $j^{\ast}$ are morphisms of sheaves, they define morphisms in
$\mathcal{C}$). Finally, $F^{w}:\mathbf{w}(\mathcal{V}_{k})\rightarrow
\mathbf{w}(\mathcal{C}^{gm})$ sends a $k$-isomorphism $f:X\rightarrow X$ to
its pushforward $f_{\ast}$ on $z_{equi}$. As $f^{-1}$ exists, $(f^{-1})_{\ast
}$ is a strict inverse $(f^{-1})_{\ast}f_{\ast}=f_{\ast}(f^{-1})_{\ast
}=\operatorname*{id}_{z_{equi}}$, and in particular this is an isomorphism of
sheaves, and therefore (trivially) a weak equivalence in $\mathcal{C}$. This
completes defining $(F_{!},F^{!},F^{w})$ and settles axioms (1)-(4) of a
weakly $W$-exact functor.

\begin{theorem}
\label{thm_MixMotRealization}Suppose $k$ is a perfect field. Let $A$ be a
commutative unital ring. If $k$ has positive characteristic $p>0$, we assume
that $\frac{1}{p}\in A$. Then $F=(F_{!},F^{!},F^{w})$ is a weakly $W$-exact
functor from $\mathcal{V}_{k}$ to $\mathcal{C}^{gm}$. 
{In particular, it induces a map of spectra}
\begin{equation}\label{lafs2}%
K(F)\colon K(\mathcal{V}_{k})\longrightarrow K(\mathcal{C}^{gm})\text{.}
\end{equation}
Moreover, for any
variety $X\in\mathcal{V}_{k}$ we have%
\begin{equation}
F(X)=M^{c}(X) \label{lafs1}%
\end{equation}
in the homotopy category $\mathrm{C}^{gm}=\mathrm{DM}_{gm,Nis}^{eff}(k;A)$,
i.e. $F(X)$ represents the motive with compact support attached to $X$. 
\end{theorem}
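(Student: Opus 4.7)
The plan is to verify the remaining axioms of weak $W$-exactness beyond the functoriality (1)--(4) already recorded before the theorem statement, and then to identify $F(X)$ with Voevodsky's motive-with-compact-support $M^c(X)$. This identification will automatically force $F$ to land in the compact subcategory $\mathcal{C}^{gm}$ rather than only in $\mathcal{C}$, and the induced map of spectra \eqref{lafs2} will then follow formally from the machinery of \cite{MR3993931}.

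The key input for the outstanding axioms is the \emph{localization sequence} for equidimensional cycle sheaves. For a subtraction sequence $Z \hookrightarrow X \overset{\circ}{\longleftarrow} Y$ with $Y = X \setminus Z$, the Suslin--Voevodsky theory (see \cite[Lec.~16]{MR2242284}) provides a four-term exact sequence of Nisnevich sheaves with transfers
\begin{equation*}
0 \to z_{equi}(Z,0) \to z_{equi}(X,0) \to z_{equi}(Y,0) \to Q \to 0
\end{equation*}
in which the cokernel $Q$ becomes acyclic after applying $C_\ast$. In characteristic zero this uses resolution of singularities; in characteristic $p>0$ one substitutes Kelly's alteration-based framework from \cite{MR3673293}, which is precisely the point where $1/p \in A$ is required. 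Applying $C_\ast$ then yields a distinguished triangle $F(Z) \to F(X) \to F(Y) \to F(Z)[1]$ in $\mathrm{C}^{gm}$. Combining with Remark \ref{rmk_obvious}(1) to replace this triangle by a genuine Cartesian/cocartesian square of complexes, and with Lemma \ref{lemma_FFWC} to apply functorial factorizations in $\mathcal{C}^{gm}$, this delivers the weak cofiber sequence demanded by the subtraction-sequence axiom of \cite[Def.~2.17]{MR3993931}. The remaining compatibility axioms among $F_!$, $F^!$ and $F^w$ reduce to the familiar commutation of push-forward and pull-back of cycles in the base-change squares appearing in an $SW$-category, which is essentially contained in \cite{MR1764199}.

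For the identification \eqref{lafs1}, we use that Voevodsky's motive with compact support is by definition $M^c(X) := C_\ast z_{equi}(X,0)$ in the relevant triangulated category (\cite[Def.~16.13]{MR2242284}), so as an object of $\mathrm{DM}_{Nis}^{eff}(k;A)$ there is nothing to check. What is nontrivial is the statement that this representative lies in the \emph{geometric} subcategory for arbitrary (possibly singular) $X$: this is the geometricity of $M^c(X)$, established by Voevodsky via resolution of singularities in characteristic zero and by Kelly \cite{MR3673293} in characteristic $p$ with $1/p$ inverted. Via Remark \ref{rmk_CompareVoevodsky} this yields $F(X) \in \mathcal{C}^{gm}$, so the target category in \eqref{lwwwa7} is justified, and the induced map of spectra then follows from \cite{MR3993931}.

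The main obstacle is the pair of black-box inputs above: the localization sequence for $z_{equi}$ and the geometricity of $M^c(X)$ for singular $X$. Both are difficult classical results, and both fail in characteristic $p$ unless $p$ is inverted, which explains the standing hypothesis on the coefficient ring $A$. Once these are granted, the verification that $F$ satisfies the Campbell--Wolfson--Zakharevich axioms is a matter of unwinding the definitions of $i_\ast$ and $j^\ast$ on cycle sheaves, and the passage to spectra is entirely formal.
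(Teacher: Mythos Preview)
Your proposal is correct and follows essentially the same approach as the paper: verify axioms (5)--(7) of weak $W$-exactness via the base-change formula from \cite{MR1764199} and the localization triangle for $C_\ast z_{equi}$ (the paper cites \cite[Theorem 5.11]{MR1764201} directly rather than the four-term sequence, but the content is the same), invoke Kelly \cite{MR3673293} in characteristic $p$, and identify $F(X)$ with $M^c(X)$ via \cite[Def.~16.13, Cor.~16.17]{MR2242284} before applying \cite[Prop.~2.19]{MR3993931}. The only cosmetic difference is that you make the role of Remark~\ref{rmk_obvious} and Lemma~\ref{lemma_FFWC} explicit where the paper leaves them implicit.
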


If resolution of singularities gets established in positive characteristic,
the theorem also holds without inverting $p$.

\begin{proof}
For the sake of legibility, we give the proof for exponential characteristic
$p=1$, and only comment on the necessary changes if $p>1$. First of all,
regarding Equation \ref{lwwwa7} we need to check that $C_{\ast}z_{equi}(T,0)$
lies in $\mathcal{C}^{gm}$ at all. In view of the definition of geometric
motives in Equation \ref{lvv1} we need to show that its image in the homotopy
category lies in $\mathrm{DM}_{gm,Nis}^{eff}$, but this is the subject of
\cite[Corollary 16.17]{MR2242284}, and furthermore $C_{\ast}z_{equi}(T,0)$
represents $M^{c}(X)$ in the notation of the theory of mixed motives,
\cite[Def. 16.13]{MR2242284} (of course the cited source shows this in the
setting of Voevodsky's framework, but by Remark \ref{rmk_CompareVoevodsky}
this is equivalent to the Beilinson--Vologodsky variant.) It remains to verify
axioms (5)-(7) of a weakly $W$-exact functor. Given the diagram below on the
left,%
\[
\xymatrix{
X \ar[d]_{j}^{\circ} \ar@{^{(}->}^{i}[r] & Z \ar[d]^{j^{\prime} }_{\circ}  \\
Y \ar@{^{(}->}_{i^{\prime}}[r] & W
}
\qquad\qquad
\xymatrix{
F(X) \ar[r]^{i_{\ast} } & F(Z)   \\
F(Y) \ar[u]^{j^{\ast}} \ar[r]_{i^{\prime}_{\ast} } & F(W) \ar[u]_{j^{\prime
\ast} }
}
\]
and assuming this is a cartesian diagram in $\mathcal{V}_{k}$, axiom (5)
demands that the strict functorialities result in the diagram above on the
right. However, this is the statement of \cite[Prop. 3.6.5]{MR1764199} in the
special case of $z_{equi}$, the pushforward proper (called $p$ loc. cit., and
with $d=0$ in the notation of the cited proposition). Next, given a
subtraction sequence%
\[
\xymatrix{
Z \ar@{^{(}->}[r]^{i} & X \\
& X - Z  \ar[u]^{j}_{\circ}
}
\]
in $\mathcal{V}_{k}$, axiom (6) is equivalent to demanding that%
\[
C_{\ast}z_{equi}(Z)\overset{i_{\ast}}{\longrightarrow}C_{\ast}z_{equi}%
(X)\overset{j^{\ast}}{\longrightarrow}C_{\ast}z_{equi}(X-Z)\longrightarrow
C_{\ast}z_{equi}(Z)[1]
\]
is a distinguished triangle in $\mathrm{DM}_{gm,Nis}^{eff}$, because if it is,
the square stated in axiom (6) will be weakly cocartesian. This is the first
part of \cite[Theorem 5.11]{MR1764201} if the field $k$ admits a resolution of
singularities.\ For positive characteristic, work of Kelly can be used as a
replacement when using $A\left[  \frac{1}{p}\right]  $-coefficients
\cite{MR3673293}. This is why in positive characteristic, we simply assume
that $p$ is already invertible in $A$. Axiom (7) really makes two statements:
The one for $F_{!}$ follows directly from the strict functoriality of the
pushforward along closed immersions as our weak equivalences are also induced
by a pushforward. The other part, for $F^{!}$, is a special case of Axiom (5).
This settles the proof if $k$ has resolution of singularities. As explained,
for $p>1$ and as long as this is not the case (if it ever will), we may also
invert $p$ everywhere, i.e. in all the categories in this section, and use
Kelly's foundations \cite{MR3673293}. Having settled that $F\ $is weakly
$W$-exact, \cite[Proposition 2.19]{MR3993931} produces a map of $K$-theory spectra.
\end{proof}

\begin{variant}
One could also define a variant%
\begin{align*}
\tilde{F}_{?}\colon\mathcal{V}_{k}  &  \longrightarrow\mathcal{C}^{gm}\\
T  &  \longmapsto z_{equi}(T,0)
\end{align*}
without using $C_{\ast}$. As $z_{equi}$ is, regarded as a complex,
concentrated in the single degree zero, we may employ \cite[Lemma
14.4]{MR2242284}, which tells us that there is an isomorphism%
\begin{equation}
z_{equi}(T,0)\overset{\sim}{\longrightarrow}C_{\ast}(z_{equi}(T,0))
\label{lwwwa7a}%
\end{equation}
in $\mathrm{DM}_{gm,Nis}^{eff}$. A variation of the above proof goes through,
but one has to plug in the equivalence of Equation \ref{lwwwa7a} and exploit
its functoriality in the verification of the axioms (6) and (7).
\end{variant}

\section{Realization maps}

\subsection{Generalities on realizations for motives}

We shall employ Vologodsky's technique to produce realizations of mixed
motives, as developed in his paper \cite{MR3004172}. The idea is that any
functor defined on smooth $k$-varieties and satisfying good descent properties
automatically extends \textit{uniquely} to a functor on \'{e}tale mixed motives.

We recall the idea in some more detail. If $\mathcal{C}_{1},\mathcal{C}_{2}$
are DG categories, write $\mathcal{T}(\mathcal{C}_{1},\mathcal{C}_{2})$ for
the category of DG quasi-functors%
\[
H\colon\mathcal{C}_{1}\longrightarrow\mathcal{C}_{2}\text{.}%
\]
Any such functor induces a functor of homotopy categories%
\[
Ho(H)\colon Ho(\mathcal{C}_{1})\longrightarrow Ho(\mathcal{C}_{2})
\]
and if $\mathcal{C}_{1},\mathcal{C}_{2}$ are triangulated, this is a
triangle functor of triangulated categories {(see \cite[Ex. 5.1.4]{toen})}. 
We write%
\[
\mathcal{T}^{c}(\mathcal{C}_{1},\mathcal{C}_{2})\subseteq\mathcal{T}%
(\mathcal{C}_{1},\mathcal{C}_{2})
\]
for the full subcategory of DG quasi-functors $H$ such that $Ho(H)$ commutes
with arbitrary direct sums.

If $\mathcal{C}$ is a DG category, $\underrightarrow{\mathcal{C}}$ denotes the
DG ind-completion. This is a triangulated cocomplete DG category such that
$\mathcal{C}\subseteq(\underrightarrow{\mathcal{C}})^{perf}$ and defining a DG
quasi-functor on $\underrightarrow{\mathcal{C}_{1}}$ commuting with all direct
sums is equivalent to providing a DG quasi-functor only on $\mathcal{C}_{1}$.
\[
\mathcal{T}^{c}(\underrightarrow{\mathcal{C}_{1}},\mathcal{C}_{2}%
)\overset{\sim}{\longrightarrow}\mathcal{T}(\mathcal{C}_{1},\mathcal{C}_{2})
\]

Now take $\mathcal{C}_{1}\colon=A[Sm]$, the category whose objects are smooth
$k$-varieties (denoted by $A[X]$ for $X$ the smooth $k$-variety) and morphisms
are%
\[
\operatorname*{Hom}\nolimits_{A[Sm]}(A[\underset{i}{\sqcup}X_{i}%
],A[Y])\colon=\coprod_{i}A[\operatorname*{Hom}\nolimits_{k}(X_{i},Y)]
\]
for each $X_{i}$ connected. See \cite[\S 2.4]{MR3004172}.

Vologodsky now introduces the full subcategory $\mathcal{T}^{h,\Delta}(-)$,%
\[
\mathcal{T}^{h,\Delta}(A[Sm],\mathcal{C}_{2})\subseteq\mathcal{T}%
(A[Sm],\mathcal{C}_{2})
\]
consisting of those DG quasi-functors $H$ which

\begin{description}
\item[$\left(  h\right)  $] satisfy hyperdescent with respect to the
$h$-topology (i.e. if $U_{\bullet}\rightarrow X$ is an $h$-hypercovering, then
$H$ sends%
\[
A_{tr}[U_{\bullet}]\longrightarrow A_{tr}[X]
\]
to a quasi-isomorphism),

\item[$\left(  \Delta\right)  $] satisfy $\mathbb{A}^{1}$-invariance (i.e. $H$
sends%
\[
A_{tr}[X\times\mathbb{A}_{k}^{1}]\longrightarrow A_{tr}[X]
\]
to a quasi-isomorphism).
\end{description}

\begin{theorem}
[{Vologodsky, \cite[Theorem 2]{MR3004172}}]\label{thm_Volo}Suppose
$\mathcal{C}$ is a triangulated cocomplete and compactly genererated
$A$-linear DG category. There is an equivalence of categories%
\[
\Phi\colon\mathcal{T}^{c}(\mathcal{DM}_{\acute{e}t}^{eff}(k;A),\mathcal{C}%
)\overset{\sim}{\longrightarrow}\mathcal{T}^{h,\Delta}(A[Sm],\mathcal{C}%
)\text{,}%
\]
such that if $X$ is a smooth proper variety and $M(X)$ its motive, then for any $H \in \mathcal{T}^{h,\Delta}(A[Sm],\mathcal{C})$%
\[
\Phi^{-1}(H)(M(X))=H(X)\text{.}%
\]

\end{theorem}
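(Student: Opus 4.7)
The plan is to exhibit $\mathcal{DM}_{\acute{e}t}^{eff}(k;A)$ as a DG (Bousfield) localization of a DG category built from smooth varieties, and then invoke the universal property of DG localization of cocomplete categories. Concretely, let $\mathcal{PSh}_{tr}^{A}(Sm_{k})$ be the DG category of (unbounded) complexes of presheaves with transfers. By construction (see \cite[\S 2.2]{MR3004172}) the ind-completed category $\underrightarrow{\mathcal{DM}_{\acute{e}t}^{eff}(k;A)}$ is the DG localization of $\mathcal{PSh}_{tr}^{A}(Sm_{k})$ along the union of two classes of maps: the \v{C}ech-type maps for \'{e}tale hypercoverings $W_{\acute{e}t}$, and the $\mathbb{A}^{1}$-equivalences $W_{\mathbb{A}}$.

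The first step is to use To\"{e}n's universal property of DG localization: for any triangulated cocomplete DG category $\mathcal{C}$, restriction along the localization functor gives an equivalence between $\mathcal{T}^{c}(\underrightarrow{\mathcal{DM}_{\acute{e}t}^{eff}(k;A)},\mathcal{C})$ and the full subcategory of $\mathcal{T}(\mathcal{PSh}_{tr}^{A}(Sm_{k}),\mathcal{C})$ consisting of quasi-functors which send $W_{\acute{e}t}\cup W_{\mathbb{A}}$ to quasi-isomorphisms. Combining this with the direct-sum/Yoneda equivalence $\mathcal{T}^{c}(\underrightarrow{\mathcal{C}_{1}},\mathcal{C}_{2})\simeq\mathcal{T}(\mathcal{C}_{1},\mathcal{C}_{2})$ recalled in the preamble to the theorem, we reduce to showing that the restriction functor
\[
\mathcal{T}^{W_{\acute{e}t},W_{\mathbb{A}}}(A_{tr}[Sm_{k}],\mathcal{C})\longrightarrow\mathcal{T}^{h,\Delta}(A[Sm_{k}],\mathcal{C})
\]
is an equivalence, where the source consists of quasi-functors out of the finite-correspondences category satisfying \'{e}tale hyperdescent and $\mathbb{A}^{1}$-invariance.

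The second step addresses $\mathbb{A}^{1}$-invariance, which transfers across the restriction verbatim since $X\times\mathbb{A}^{1}\to X$ is already a morphism in $A[Sm_{k}]$. The third and central step is the comparison of the descent conditions: on the target side we only demand $h$-hyperdescent for ordinary morphisms, while on the source side we have \'{e}tale hyperdescent but with respect to transfers. The key input here is the Suslin--Voevodsky observation (as reformulated in \cite{MR3004172}) that in the presence of $h$-hyperdescent, transfer structures emerge automatically: every finite surjective morphism is an $h$-cover, so for an $h$-hyperdescent functor $H$ on $A[Sm_{k}]$ one can manufacture the transfer along a finite correspondence $Z\subseteq X\times Y$ by descending along the normalization/Galois cover of $Z\to X$. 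Conversely, \'{e}tale hyperdescent for a quasi-functor on $A_{tr}[Sm_{k}]$ combined with the existence of transfers along finite surjective maps forces $h$-hyperdescent after restriction. These two constructions are mutually quasi-inverse up to coherent quasi-isomorphism, yielding the desired equivalence.

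The last verification is that the resulting $\Phi$ sends $M(X)$ to $H(X)$ for $X$ smooth and proper: this follows because $M(X)=L_{\mathbb{A}^{1}}(A_{tr}[X])$ is the image of the representable in $A_{tr}[Sm_{k}]$ under the localization, and the equivalence above is constructed so that the representable $A[X]$ on the smooth-variety side corresponds to $A_{tr}[X]$ on the transfers side (using $\operatorname{Hom}_{k}(S,X)\hookrightarrow Cor_{k}(S,X)$ to cast any morphism as a correspondence). The main obstacle is the descent-with/without-transfers comparison in step three, which is the substantial content of Vologodsky's argument; the rest is formal manipulation of DG localizations and of the universal property of $\underrightarrow{(-)}$.
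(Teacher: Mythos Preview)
The paper does not prove this theorem. It is quoted as Vologodsky's result \cite[Theorem 2]{MR3004172}, and immediately after the statement the authors write: ``We just state this as the existence of $\Phi$, but really the equivalence is constructed in a concrete fashion in \cite{MR3004172}. We will not recall this in detail.'' So there is no proof in the paper to compare your sketch against; the theorem functions here purely as a black box imported from the literature.

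That said, your outline is a reasonable summary of the shape of Vologodsky's argument: realize $\mathcal{DM}_{\acute{e}t}^{eff}(k;A)$ as a DG localization of presheaves with transfers, invoke the universal property of DG localization for cocomplete targets, and reduce to the comparison between \'{e}tale-descent-with-transfers on $A_{tr}[Sm_k]$ and $h$-descent-without-transfers on $A[Sm_k]$, with the Suslin--Voevodsky observation that $h$-descent forces transfers as the key bridge. One small slip: you write $\underrightarrow{\mathcal{DM}_{\acute{e}t}^{eff}(k;A)}$ for the localization of $\mathcal{PSh}_{tr}^{A}(Sm_k)$, but $\mathcal{DM}_{\acute{e}t}^{eff}(k;A)$ is itself already cocomplete and is precisely that localization in the Beilinson--Vologodsky setup, so the ind-completion arrow there is misplaced (the theorem's source category is $\mathcal{DM}_{\acute{e}t}^{eff}(k;A)$, not its ind-completion). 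Also, your final paragraph explaining $\Phi^{-1}(H)(M(X))=H(X)$ does not use properness of $X$ anywhere; that is fine, since the identity holds for all smooth $X$, and the paper only singles out the smooth proper case because later it needs $M(X)\simeq M^{c}(X)$.
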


We just state this as the existence of $\Phi$, but really the equivalence is
constructed in a concrete fashion in \cite{MR3004172}. We will not recall this
in detail.

In other words: As soon as we exhibit a DG quasi-functor $H$ defined on
$A[Sm]$ which satisfies $\left(  h\right)  $ and $\left(  \Delta\right)  $,
this uniquely determines an extension $\Phi^{-1}(H)$ on all \'{e}tale
effective mixed motives.

Vologodsky then constructs all the standard realizations of mixed motives by
first setting them up on $A[Sm]$ (the details are a little more involved, but
this is the essential point).

We can now deduce the corresponding realization in the setting of this paper.

\subsection{The realization theorem}

Suppose $k$ is a field for which resolution of singularities is available,
e.g. of characteristic zero. As before, let $A$ be a commutative unital ring.
If $p$ has positive characteristic $p>0$, we assume $\frac{1}{p}\in A$.

{Suppose $\mathcal{D}$ is a triangulated cocomplete and compactly
genererated $A$-linear DG category. 
Let%
\[
\mathcal{D}_{finite}\subseteq\mathcal{D}%
\]
some full DG\ subcategory. Usually, $\mathcal{D}_{finite}:=\mathcal{D}^{perf}$
will the choice we are interested in; the full DG\ subcategory of objects
whose image in the homotopy category $\mathrm{D}$ is
compact. As discussed earlier, we can associate study the algebraic $K$-theory $K(\mathcal{D}_{finite})$ as the Waldhausen $K$-theory of the Waldhausen category $\mathrm{perf}_{co}(\mathcal{D}_{finite})$. We remind the reader of Remark \ref{convention}, according to which we do not notationally distinguish between an object in $\mathcal{D}_{finite}$ and the induced DG module (given by the Yoneda embedding).}
\begin{theorem}
\label{thm_GeneralRealizationTheorem}Suppose $k$ is a perfect field having
resolution of singularities and of finite cohomological dimension with respect
to $A$-coefficients in the sense of Equation \ref{l_Cond_GaloisCohomDim}.
Suppose we are given a DG quasi-functor%
\[
\phi\in\mathcal{T}^{h,\Delta}(A[Sm],\mathcal{D}_{finite})\text{.}%
\]
Then there is a map of spectra%
\[
\operatorname*{R}\nolimits^{\phi}\colon K(\mathcal{V}_{k})\longrightarrow
K(\mathcal{D}_{finite})
\]
such that the following hold:

\begin{enumerate}
\item Suppose $X$ is a smooth proper $k$-variety. In $K_{0}$ we get%
\[
\operatorname*{R}\nolimits^{\phi}([X])=[\phi(X)]
\]
and if $f\colon X\overset{\sim}{\rightarrow}X$ is any automorphism, we get in
$K_{1}$ that%
\[
\operatorname*{R}\nolimits^{\phi}([f\colon X\overset{\sim}{\rightarrow
}X])=\left[  \phi(f)\colon\phi(X)\overset{\sim}{\rightarrow}\phi(X)\right]
\text{.}%
\]

\item Suppose $X$ is a smooth $k$-variety, $\overline{X}$ a smooth
compactification with a smooth closed subvariety $Z\subseteq\overline{X}$ such
that $X=\overline{X}\setminus Z$. Then in $K_{0}$,%
\[
\operatorname*{R}\nolimits^{\phi}([X])=[\phi(\overline{X})]-[\phi(Z)]
\]
and if one can extend the automorphisms such that%
\[
\xymatrix{
Z \ar[d]_{f} \ar@{^{(}->}[r] & \overline{X} \ar[d]^{f} \\
Z \ar@{^{(}->}[r] & \overline{X}
}
\]
commutes, then $\operatorname*{R}\nolimits^{\phi}([f\colon X\overset{\sim
}{\rightarrow}X])=\operatorname*{R}\nolimits^{\phi}([f\colon\overline
{X}\overset{\sim}{\rightarrow}\overline{X}])-\operatorname*{R}\nolimits^{\phi
}([f\mid_{Z}\colon Z\overset{\sim}{\rightarrow}Z])$.
\end{enumerate}
\end{theorem}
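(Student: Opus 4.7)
The plan is to compose the motivic realization $K(F)$ of Theorem~\ref{thm_MixMotRealization} with a realization of motives induced by $\phi$ via Vologodsky's equivalence. Concretely, I would first feed $\phi$, post-composed with the inclusion $\mathcal{D}_{finite}\hookrightarrow\mathcal{D}$, into Theorem~\ref{thm_Volo} to obtain a DG quasi-functor
\[
\Psi:=\Phi^{-1}(\phi)\colon \mathcal{DM}_{\acute{e}t}^{eff}(k;A)\longrightarrow\mathcal{D}
\]
commuting with arbitrary direct sums. Restricting $\Psi$ to compact objects yields a DG quasi-functor $\Psi^{gm}\colon \mathcal{DM}_{gm,\acute{e}t}^{eff}(k;A)\to\mathcal{D}$. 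Under the resolution of singularities hypothesis, this category is the thick DG subcategory generated by the $M(X)$ for $X$ smooth proper, and on such generators $\Psi^{gm}(M(X))=\phi(X)\in\mathcal{D}_{finite}$ by Vologodsky's theorem; since $\mathcal{D}_{finite}$ is (by the tacit conditions) a thick triangulated DG subcategory of $\mathcal{D}$, $\Psi^{gm}$ lands in $\mathcal{D}_{finite}$. Precomposing with the Nisnevich-to-\'etale comparison quasi-functor \eqref{lzz1} gives $\Psi'\colon \mathcal{C}^{gm}\to\mathcal{D}_{finite}$, and Waldhausen $K$-theory applied to $\Psi'$, as recalled in Subsection~\ref{prelim}, yields $K(\Psi')\colon K(\mathcal{C}^{gm})\to K(\mathcal{D}_{finite})$. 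Set $\operatorname{R}^{\phi}:=K(\Psi')\circ K(F)$.

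For assertion~(1), $X$ smooth and proper implies $M^{c}(X)=M(X)$, so Theorem~\ref{thm_MixMotRealization} gives $F(X)=M(X)$ in $\mathrm{C}^{gm}$, and then $\Psi'(F(X))=\phi(X)$ by construction; the identity in $K_{0}$ follows. The $K_{1}$-identity for an automorphism $f$ follows because $F^{w}$ sends $f$ to the induced automorphism of $M^{c}(X)=M(X)$, and a DG quasi-functor induces a genuine map of Waldhausen $K$-theory spectra, hence a ring of automorphism classes is sent to automorphism classes of the images. For assertion~(2), the subtraction sequence $Z\hookrightarrow\overline{X}\hookleftarrow X$ fits into axiom~(6) of a weakly $W$-exact functor (see the proof of Theorem~\ref{thm_MixMotRealization}), giving a distinguished triangle $F(Z)\to F(\overline{X})\to F(X)\to F(Z)[1]$ in $\mathrm{C}^{gm}$. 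Hence in $K_{0}(\mathcal{C}^{gm})$ we have $[F(X)]=[F(\overline{X})]-[F(Z)]$, and since $\overline{X},Z$ are smooth proper, applying $\Psi'$ identifies the right-hand side with $[\phi(\overline{X})]-[\phi(Z)]$. When $f$ extends as indicated, the same distinguished triangle is equivariant for the induced self-maps, producing the corresponding identity in $K_{1}$.

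The main obstacle I expect is the bookkeeping that ensures $\Psi$ restricts to $\mathcal{D}_{finite}$ rather than merely landing in $\mathcal{D}$; this uses both resolution of singularities (to guarantee that smooth proper motives generate $\mathcal{DM}_{gm,\acute{e}t}^{eff}(k;A)$) and the cohomological dimension hypothesis \eqref{l_Cond_GaloisCohomDim} (so that the passage from Nisnevich to \'etale motives does not introduce unbounded cohomological behavior that would break compactness of the image). A secondary subtlety is that Vologodsky's theorem is stated for the \'etale DG category, whereas Theorem~\ref{thm_MixMotRealization} produces an object of the Nisnevich variant; the DG quasi-functor \eqref{lzz1} bridges this gap, but one must verify that $M^{c}(X)$ is sent to the correct \'etale counterpart, which is where the finite cohomological dimension assumption again intervenes. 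Once these compatibilities are in place, the rest of the argument is a formal chase through the $K$-theory functoriality for DG quasi-functors.
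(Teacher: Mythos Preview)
Your proposal is correct and follows essentially the same route as the paper: invoke Theorem~\ref{thm_MixMotRealization} to get $K(F)$, apply Vologodsky's equivalence (Theorem~\ref{thm_Volo}) to extend $\phi$ to a DG quasi-functor out of $\mathcal{DM}_{\acute{e}t}^{eff}(k;A)$, restrict to geometric motives (where the image lands in $\mathcal{D}_{finite}$ since the $M(X)$ generate), precompose with the change-of-topology functor \eqref{lzz1}, and then verify (1) and (2) via $M^{c}(X)\simeq M(X)$ for smooth proper $X$ and the localization triangle. The only cosmetic difference is that you generate the compact objects by $M(X)$ for $X$ smooth \emph{proper} (invoking resolution of singularities), whereas the paper uses $M(X)$ for $X$ merely smooth; both are valid.
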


\begin{proof}
By Theorem \ref{thm_MixMotRealization} we obtain a map of spectra%
\[
K(F)\colon K(\mathcal{V}_{k})\longrightarrow K(\mathcal{C}^{gm})\text{.}%
\]
The DG\ quasi-functor $\phi\in\mathcal{T}^{h,\Delta}(A[Sm],\mathcal{D}%
_{finite})$ defines (tautologically) an element of $\mathcal{T}^{h,\Delta
}(A[Sm],\mathcal{D})$ by $\mathcal{D}_{finite}\subseteq\mathcal{D}$. From
Vologodsky's Theorem (Theorem \ref{thm_Volo}) we therefore obtain a
DG\ quasi-functor%
\[
\tilde{\phi}\in\mathcal{T}^{c}(\mathcal{DM}_{\acute{e}t}^{eff}%
(k;A),\mathcal{D})\text{,}%
\]
i.e.%
\[
\tilde{\phi}\colon\mathcal{DM}_{\acute{e}t}^{eff}(k;A)\longrightarrow
\mathcal{D}\text{.}%
\]
Note that the values of $\tilde{\phi}$ need \textit{not} lie in $\mathcal{D}%
_{finite}$ because of the colimit procedures carried out in the construction
of $\tilde{\phi}$. However, we have%
\[
\tilde{\phi}(M(X))=\phi(X)\in\mathcal{D}_{finite}%
\]
if $X$ is a smooth $k$-variety and $M(X)$ its motive, still by Theorem
\ref{thm_Volo}. As the triangulated category $\mathrm{DM}_{\acute{e}t}%
^{eff}(k;A)$ of geometric motives is compactly generated and the compact
objects are generated by the $M(X)$ (\cite[Corollary 2.4 and below]%
{MR3004172}), we obtain a DG quasi-functor to $\mathcal{D}_{finite}$,%
\[
\left.  \tilde{\phi}\mid_{\mathcal{DM}_{gm,\acute{e}t}^{eff}(k;A)}%
\colon\right.  \mathcal{DM}_{gm,\acute{e}t}^{eff}(k;A)\longrightarrow
\mathcal{D}_{finite}\text{.}%
\]
Now form the composition of DG\ quasi-functors%
\[
\mathcal{C}^{gm}=\mathcal{DM}_{gm,Nis}^{eff}(k;A)\overset{(\#)}%
{\longrightarrow}\mathcal{DM}_{gm,\acute{e}t}^{eff}(k;A)\overset{\tilde{\phi}%
\mid_{(...)}}{\longrightarrow}\mathcal{D}_{finite}\text{,}%
\]
where $(\#)$ is the change-of-topology quasi-functor from Equation \ref{lzz1}.
Suppose $X$ is a smooth \textit{proper} $k$-variety. Then there is a canonical
quasi-isomorphism $M^{c}(X)\underset{qis}{\overset{\sim}{\longrightarrow}%
}M(X)$ from the compactly supported motive, see \cite[Lec. 16]{MR2242284}. Now
we can prove (1). On the level of $K_{0}$, we have%
\[
K(F)_{0}\colon K_{0}(\mathcal{V}_{k})\longrightarrow K_{0}(\mathcal{C}%
^{gm})\longrightarrow K_{0}(\mathcal{D}_{finite})
\]
sending%
\[
\lbrack X]\mapsto\lbrack M^{c}(X)]=[M(X)]\mapsto\lbrack\tilde{\phi}(X)]
\]
and on $K_{1}$ we can basically use the same argument. We now prove (2). Using
\cite[Theorem 16.15]{MR2242284} we have a triangle%
\[
M^{c}(Z)\overset{i_{\ast}}{\longrightarrow}M^{c}(\overline{X})\overset
{j^{\ast}}{\longrightarrow}M^{c}(X)\longrightarrow M^{c}(Z)[1]
\]
and thus a canonical quasi-isomorphism between $M^{c}(X)$ and the cone (on the
DG category level) below on the left,%
\begin{equation}
\operatorname{cone}\left(  M^{c}(Z)\overset{i_{\ast}}{\longrightarrow}%
M^{c}(\overline{X})\right)  \overset{\sim}{\longrightarrow}_{qis}%
\operatorname{cone}\left(  M(Z)\overset{i_{\ast}}{\longrightarrow}%
M(\overline{X})\right)  \text{,} \label{lafs2b}%
\end{equation}
and the quasi-isomorphism in the middle holds since $Z$ and $\overline{X}$ are
both smooth and proper. Hence, we can reduce our claim about $K_{0}$ to (1)
and the cone of complexes is easy to see to correspond to a difference in
$K_{0}$. For the claim about $\varphi$, note that Equation \ref{lafs1a}
ensures that $\varphi$ also acts on the cones in Equation \ref{lafs2b}, and
then also reduces to (1).\newline
\end{proof}

\section{Example: The Betti realization}

We illustrate our constructions with the concrete example of the Betti
realization. Suppose $k\subseteq\mathbb{C}$ is any subfield of the complex
numbers. If $X$ is a $k$-variety, we write $X_{\mathbb{C}}$ for the complex
manifold attached to the smooth $\mathbb{C}$-variety $X\times_{k}%
\operatorname*{Spec}\mathbb{C}$.

Suppose $A$ is a commutative Noetherian unital ring, which will
serve as the coefficient ring for the Betti realization. We write
$C^{\operatorname{sing}}(X_{\mathbb{C}},A)$ for the singular chain complex
with coefficients in $A$.

{\begin{remark}
{One can drop Noetherian if
one works with coherent $A$-modules instead of finitely generated ones below. We prefer to stick to the Noetherian assumption for the sake of simplicity.}
\end{remark}}

\begin{theorem}
There is a map of spectra%
\[
\operatorname*{R}\nolimits_{A}^{Betti}\colon K(\mathcal{V}_{k})\longrightarrow
K(\mathsf{Mod}_{fg}(A))
\]
such that the following hold:

\begin{enumerate}
\item Suppose $X$ is a smooth proper $k$-variety. In $K_{0}$ we get%
\begin{equation}
\operatorname*{R}\nolimits_{A}^{Betti}([X])=\sum_{i}(-1)^{i}[H^{Betti}%
_{i}(X_{\mathbb{C}},A)]\text{,} \label{lzz3}%
\end{equation}
the cohomology of the complex manifold $X_{\mathbb{C}}$. If $\varphi\colon
X\overset{\sim}{\rightarrow}X$ is any automorphism, we get%
\[
\operatorname*{R}\nolimits_{A}^{Betti}([\varphi\colon X\overset{\sim
}{\rightarrow}X])=\sum_{i}(-1)^{i}[\varphi_{\ast}H^{Betti}_{i}(X_{\mathbb{C}%
},A)]
\]
in $K_{1}$.

\item Suppose $X$ is a smooth $k$-variety, $\overline{X}$ a smooth
compactification with a smooth closed subvariety $Z\subseteq\overline{X}$ such
that $X=\overline{X}\setminus Z$. Then in $K_{0}$,%
\[
\operatorname*{R}\nolimits_{A}^{Betti}([X])=\operatorname*{R}\nolimits_{A}%
^{Betti}([\overline{X}]-[Z])
\]
and if
\begin{equation}
\xymatrix{
Z \ar[d]_{\varphi} \ar@{^{(}->}[r] & \overline{X} \ar[d]^{\varphi} \\
Z \ar@{^{(}->}[r] & \overline{X}
}
\label{lafs1a}
\end{equation}
commutes, then $\operatorname*{R}\nolimits_{A}^{Betti}([\varphi\colon
X\overset{\sim}{\rightarrow}X])=\operatorname*{R}\nolimits_{A}^{Betti}%
([\varphi\colon\overline{X}\overset{\sim}{\rightarrow}\overline{X}%
]-[\varphi\colon Z\overset{\sim}{\rightarrow}Z])$.
\end{enumerate}
\end{theorem}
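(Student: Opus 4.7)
The plan is to instantiate Theorem \ref{thm_GeneralRealizationTheorem} with the singular chain complex functor. I would take $\mathcal{D}$ to be a DG enhancement of the unbounded derived category of $A$-modules, and $\mathcal{D}_{finite}$ a DG enhancement of the bounded derived category with finitely generated cohomology. Since $A$ is Noetherian, the Gillet--Waldhausen theorem identifies $K(\mathcal{D}_{finite})$ with $K(\mathsf{Mod}_{fg}(A))$, so it suffices to construct a DG quasi-functor $\phi \in \mathcal{T}^{h,\Delta}(A[Sm], \mathcal{D}_{finite})$ given on objects by $\phi(X) = C^{\operatorname{sing}}(X_{\mathbb{C}}, A)$.

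First I would verify that the values genuinely lie in $\mathcal{D}_{finite}$: for any finite type $k$-scheme $X$ the analytic space $X_{\mathbb{C}}$ admits a finite triangulation, so its singular chain complex has finitely generated homology concentrated in a bounded range. Condition $(\Delta)$ is immediate, since the analytification of $X \times_k \mathbb{A}^1_k \to X$ is $X_{\mathbb{C}} \times \mathbb{C} \to X_{\mathbb{C}}$, a deformation retract because $\mathbb{C}$ is contractible.

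The main obstacle will be verifying $(h)$, i.e.\ $h$-hyperdescent. Given an $h$-hypercovering $U_\bullet \to X$ in $Sm_k$, analytification yields a hypercovering $U_{\bullet, \mathbb{C}} \to X_{\mathbb{C}}$ for the Grothendieck topology on complex analytic spaces generated by open covers and proper surjections. Cohomological descent of singular chains with respect to such covers is classical (SGA 4, Deligne \emph{Hodge III}), and in the precise DG form needed it appears in the constructions of Betti realizations of motives by Ayoub and by Huber, whose verification we may import.

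Finally, applying Theorem \ref{thm_GeneralRealizationTheorem} to $\phi$ yields the map $\operatorname{R}^{Betti}_A := \operatorname{R}^{\phi} \colon K(\mathcal{V}_k) \to K(\mathcal{D}_{finite}) \simeq K(\mathsf{Mod}_{fg}(A))$. Properties (1) and (2) then reduce to the corresponding parts of Theorem \ref{thm_GeneralRealizationTheorem}, once one uses the standard identity in the $K_0$ of bounded complexes to rewrite $[\phi(X)]$ as the alternating sum $\sum_i (-1)^i [H^{Betti}_i(X_{\mathbb{C}}, A)]$; the $K_1$ formulas follow the same way applied to automorphisms.
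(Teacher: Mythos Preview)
Your approach is essentially identical to the paper's: instantiate Theorem~\ref{thm_GeneralRealizationTheorem} with the singular chain complex, take $\mathcal{D}_{finite}$ to be bounded complexes of finitely generated $A$-modules, and invoke Gillet--Waldhausen to pass from $K(C^b(\mathsf{Mod}_{fg}(A)))$ to $K(\mathsf{Mod}_{fg}(A))$. The paper offloads the verification of $(h)$ and $(\Delta)$ entirely to Vologodsky \cite[\S 2.7]{MR3004172}, whereas you sketch these directly; either is acceptable.

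There is, however, one genuine omission. Theorem~\ref{thm_GeneralRealizationTheorem} requires the base field to have finite cohomological dimension with respect to $A$-coefficients in the sense of Equation~\ref{l_Cond_GaloisCohomDim}, and this can fail for subfields $k\subseteq\mathbb{C}$: for instance $k=\mathbb{R}$ with $A=\mathbb{Z}$ (Example~\ref{example_CohomDim}). You cannot therefore apply Theorem~\ref{thm_GeneralRealizationTheorem} directly over $k$. The paper repairs this by first applying the base-change functor $\mathcal{V}_k\to\mathcal{V}_{\mathbb{C}}$ on $SW$-categories and then invoking Theorem~\ref{thm_GeneralRealizationTheorem} over $\mathbb{C}$, where the Galois group is trivial and the condition holds tautologically. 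You should insert this reduction step at the outset.
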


\begin{remark}
If $A$ is regular, this yields a map%
\[
\operatorname*{R}\nolimits_{A}^{Betti}\colon K(\mathcal{V}_{k})\longrightarrow
K(A)
\]
with the same properties, except that each $H^{Betti}_{i}$ is tacitly replaced
by a finite projective resolution.
\end{remark}

\begin{proof}
Condition \ref{l_Cond_GaloisCohomDim} is harmless because we can first
base-change%
\[
\mathcal{V}_{k}\longrightarrow\mathcal{V}_{\mathbb{C}}%
\]
and work in the latter situation, where the condition is tautologically
satisfied (Example \ref{example_CohomDim}). So without loss of generality,
$k=\mathbb{C}$. We use Theorem \ref{thm_GeneralRealizationTheorem} for the
Betti realization, as in \cite[\S 2.7]{MR3004172}. A smooth variety $X$ gets
sent to the singular chain complex with coefficients in $A$,%
\[
C^{\operatorname{sing}}(X_{\mathbb{C}},A)\text{.}%
\]
For the motive $M(X)$ of a smooth variety the complex $C^{\operatorname{sing}%
}(X_{\mathbb{C}},A)$ has finite homological support (namely concentrated in
degrees $[0,2\dim X]$) and each cohomology group is finitely generated. Hence,
we may take bounded complexes of finitely generated $A$-modules%
\[
\mathcal{D}_{finite}:=C^{b}(\mathsf{Mod}_{fg}(A))\qquad\text{and}%
\qquad\mathcal{D}:=C(\mathsf{Mod}(A))
\]
inside all complexes and all modules (all with the standard DG structure). If
we use Theorem \ref{thm_GeneralRealizationTheorem} now, we obtain almost our
claim, except for the map being%
\[
K(\mathcal{V}_{k})\longrightarrow K(C^{b}(\mathsf{Mod}_{fg}(A)))
\]
Note that the Waldhausen structure on $C^{b}(\mathsf{Mod}_{fg}(A))$ is such
that the weak equivalences are the quasi-isomorphisms, so within $K$-theory
language one would perhaps stress this by writing $K(qC^{b}(\mathsf{Mod}%
_{fg}(A)))$. By the Gillet--Waldhausen Theorem (\cite[Chapter V, Theorem
2.2]{MR3076731}) there is an equivalence%
\[
K(\mathsf{Mod}_{fg}(A))\longrightarrow K(qC^{b}(\mathsf{Mod}_{fg}(A)))\text{,}%
\]
where the left side is the ordinary $K$-theory of an abelian category and the
right side is the aforementioned Waldhausen $K$-theory with respect to weak
equivalences. The map is induced from sending an object $M\in\mathsf{Mod}%
_{fg}(A)$ to the complex concentrated in degree zero. The inverse map, on the
level of $K_{0}$ (and analogously $K_{1}$), then is%
\[
\lbrack C^{\bullet}]\mapsto\sum_{i}(-1)^{i}[C^{i}]
\]
for any bounded complex $C^{\bullet}$. This yields the statement of our
theorem, in particular the signs in Equation \ref{lzz3}.
\end{proof}

\section{Example: The Hodge realization}

Let $k=\mathbb{C}$ and $\mathbb{Z}\subseteq A\subseteq\mathbb{Q}$. Let
$MHS_{eff}^{A}$ be the category of effective polarizable $A$-Hodge structures
(as in \cite[\S 2.8]{MR3004172}). Vologodsky characterizes {effectiveness} by%
\[
F^{1}=0\text{.}%
\]
{This property is evidently extension-closed, and thus $MHS_{eff}^{A}$ is endowed with the structure of an exact category.}

\begin{remark}
For a pure Hodge structure $M$ of weight $w$ with%
\[
M\otimes_{A}\mathbb{C=}\bigoplus_{p+q=w}M^{p,q}%
\]
effectiveness is equivalent to%
\[
M^{p,q}=0
\]
{for $p>0$ or $q>0$. This is because we follow Vologodsky's conventions, which models the properties of effective pure Hodge structures on Hodge structures on singular \emph{homology} of a smooth projective variety.}
\end{remark}

\begin{theorem}
There is a map of spectra%
\[
\operatorname*{R}\nolimits_{A}^{Hodge}\colon K(\mathcal{V}_{k})\longrightarrow
K(MHS_{eff}^{A})
\]
such that the following hold:

\begin{enumerate}
\item Suppose $X$ is a smooth proper $k$-variety. In $K_{0}$ we get%
\[
\operatorname*{R}\nolimits_{A}^{Hodge}([X])=\sum_{i}(-1)^{i}[H_{i}%
(X_{\mathbb{C}},A)]\text{,}%
\]
the cohomology of the complex manifold $X_{\mathbb{C}}$. If $\varphi\colon
X\overset{\sim}{\rightarrow}X$ is any automorphism, we get%
\[
\operatorname*{R}\nolimits_{A}^{Hodge}([\varphi\colon X\overset{\sim
}{\rightarrow}X])=\sum_{i}(-1)^{i}[\varphi_{\ast}H_{i}(X_{\mathbb{C}},A)]
\]
in $K_{1}$.

\item Suppose $X$ is a smooth $k$-variety, $\overline{X}$ a smooth
compactification with a smooth closed subvariety $Z\subseteq\overline{X}$ such
that $X=\overline{X}\setminus Z$. Then in $K_{0}$,%
\[
\operatorname*{R}\nolimits_{A}^{Hodge}([X])=\operatorname*{R}\nolimits_{A}%
^{Hodge}([\overline{X}]-[Z])
\]
and if%
\[
\xymatrix{
Z \ar[d]_{\varphi} \ar@{^{(}->}[r] & \overline{X} \ar[d]^{\varphi} \\
Z \ar@{^{(}->}[r] & \overline{X}
}
\]
commutes, then $\operatorname*{R}\nolimits_{A}^{Hodge}([\varphi\colon
X\overset{\sim}{\rightarrow}X])=\operatorname*{R}\nolimits_{A}^{Hodge}%
([\varphi\colon\overline{X}\overset{\sim}{\rightarrow}\overline{X}%
]-[\varphi\colon Z\overset{\sim}{\rightarrow}Z])$.
\end{enumerate}
\end{theorem}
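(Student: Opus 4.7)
The plan is to invoke Theorem \ref{thm_GeneralRealizationTheorem} exactly as in the proof of the Betti realization, but with the Hodge DG quasi-functor of Vologodsky \cite[\S 2.8]{MR3004172} as input. Since $k \subseteq \mathbb{C}$ we may first base-change along $\mathcal{V}_k \to \mathcal{V}_{\mathbb{C}}$, and work with $k = \mathbb{C}$; condition \ref{l_Cond_GaloisCohomDim} is then automatic by Example \ref{example_CohomDim}(1), and $\mathbb{C}$ admits resolution of singularities, so the hypotheses of Theorem \ref{thm_GeneralRealizationTheorem} are met. Vologodsky constructs a DG quasi-functor $\phi^{Hodge} \in \mathcal{T}^{h,\Delta}(A[Sm], \mathcal{D})$ sending a smooth $\mathbb{C}$-variety $X$ to a bounded complex representing the mixed $A$-Hodge structure on $H_\bullet(X_{\mathbb{C}}, A)$.

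Take $\mathcal{D}$ to be a cocomplete, compactly generated triangulated DG enhancement of the unbounded derived category of polarizable mixed $A$-Hodge structures (not necessarily effective), and set
\[
\mathcal{D}_{finite} := C^b(MHS_{eff}^A)
\]
with its standard DG structure; the exact-category structure recorded in the statement makes this a well-defined Waldhausen subcategory. Two verifications are needed so that $\phi^{Hodge}$ actually lands in $\mathcal{D}_{finite}$ on the image of smooth varieties: (a) for any smooth $X$ the complex has bounded homological support (in $[0, 2\dim X]$) with finitely generated Hodge structures in each degree, and (b) each such Hodge structure is effective. Point (a) is standard. Point (b) follows from Vologodsky's homological conventions: for a smooth variety $Y$, the mixed Hodge structure on $H_i(Y_\mathbb{C}, A)$ has weights $\leq 0$, and its $(p,q)$-decomposition satisfies $p,q\leq 0$, so $F^1 = 0$. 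Since effectiveness is preserved by extensions, the entire truncated complex lies in $C^b(MHS_{eff}^A)$.

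Theorem \ref{thm_GeneralRealizationTheorem} then produces a map of spectra $K(\mathcal{V}_k) \longrightarrow K(C^b(MHS_{eff}^A))$. The Gillet--Waldhausen theorem \cite[Ch. V, Theorem 2.2]{MR3076731}, applied to the exact category $MHS_{eff}^A$, gives an equivalence
\[
K(MHS_{eff}^A) \overset{\sim}{\longrightarrow} K(qC^b(MHS_{eff}^A)),
\]
where $q$ denotes the Waldhausen structure with quasi-isomorphisms as weak equivalences; its inverse sends a bounded complex $C^\bullet$ to $\sum_i (-1)^i [C^i]$ in $K_0$, and analogously in $K_1$ for an automorphism. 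Composing yields $\operatorname{R}^{Hodge}_A$, and Claim (1) follows by applying Theorem \ref{thm_GeneralRealizationTheorem}(1) to $X$ smooth proper, noting that $\phi^{Hodge}(X)$ computes $H_\bullet(X_\mathbb{C}, A)$ as a complex of pure Hodge structures. Claim (2) follows from Theorem \ref{thm_GeneralRealizationTheorem}(2) together with the same alternating-sum identification applied to $\overline{X}$ and $Z$, using commutativity of the square in Equation \ref{lafs1a} to ensure the automorphism-level identity.

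The main obstacle is point (b), the effectiveness: one must track carefully that Vologodsky's homological/covariant conventions are what produce Hodge structures with $F^1 = 0$, rather than a dual convention where one would land outside $MHS_{eff}^A$ and would need a Tate twist to correct. Once the conventions are pinned down, everything else is a straightforward parallel to the Betti case.
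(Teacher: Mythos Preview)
Your proposal follows the same route as the paper: apply Theorem \ref{thm_GeneralRealizationTheorem} with Vologodsky's Hodge quasi-functor, take $\mathcal{D}_{finite}=C^{b}(MHS_{eff}^{A})$, and finish with Gillet--Waldhausen. The structure and the effectiveness discussion are correct.

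There is one genuine technical slip in your choice of the ambient category $\mathcal{D}$. You ask for ``a cocomplete, compactly generated triangulated DG enhancement of the unbounded derived category of polarizable mixed $A$-Hodge structures.'' But polarizable mixed Hodge structures do not form a cocomplete abelian category: they are finite objects and do not admit arbitrary direct sums, so their derived category is not cocomplete either, and no such $\mathcal{D}$ exists as stated. Theorem \ref{thm_GeneralRealizationTheorem} (via Theorem \ref{thm_Volo}) genuinely needs $\mathcal{D}$ cocomplete and compactly generated. The paper fixes this by taking
\[
\mathcal{D}:=C(\mathsf{Lex}(MHS_{eff}^{A}))\text{,}
\]
where $MHS_{eff}^{A}\hookrightarrow\mathsf{Lex}(MHS_{eff}^{A})$ is the Quillen embedding of the exact category into a Grothendieck abelian category; this is cocomplete, and $\mathcal{D}_{finite}=C^{b}(MHS_{eff}^{A})$ sits inside it as required. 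Once you replace your $\mathcal{D}$ by this (or by any other legitimate cocompletion, e.g.\ ind-mixed Hodge structures), the rest of your argument goes through verbatim and matches the paper.
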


\begin{proof}
We proceed as for the Betti realization, but with%
\[
\mathcal{D}_{finite}:=C^{b}(MHS_{eff}^{A})\qquad\text{and}\qquad
\mathcal{D}:=C(\mathsf{Lex}(MHS_{eff}^{A}))\text{,}%
\]
where%
\[
MHS_{eff}^{A}\hookrightarrow\mathsf{Lex}(MHS_{eff}^{A})
\]
is the Quillen embedding, realizing the exact category as an extension-closed
full subcategory of a Grothendieck abelian category. Again, by
Gillet--Waldhausen (as in the plain Betti situation) reduce from bounded
complexes up to quasi-isomorphism to $K(MHS_{eff}^{A})$.
\end{proof}%

\section{Other realizations}

By the work of Cisinski and D\'{e}glise any mixed Weil cohomology (as well as what they call a \textit{stable cohomology} loc. cit.) satisfies $h$-descent and $\mathbb{A}^1$-invariance, see \cite[Corollary 17.2.6]{MR3971240}. This should cover most interesting realizations whose coefficients are a $\mathbb{Q}$-algebra. For example (besides the aforementioned Betti realization if one takes $\mathbb{Q}$-coefficients) de Rham cohomology or syntomic cohomology over $p$-adic fields (\cite{MR3394127}, or use \cite[Theorem A.1]{MR3556797}). We leave carefully choosing categories of values $\mathcal{D}_{finite}$ to the reader, as this varies from case to case, and the optimal details may depend on what concrete applications the reader may have in mind.

Another realization of interest can be obtained from \cite{sosnilo}. In \emph{loc. cit.}, Sosnilo constructs an exact functor of stable $\infty$-categories $\mathcal{C} \to \mathrm{Com}^b(\underline{Hw})$, where $\mathcal{C}$ is a stable $\infty$-category endowed with a bounded weight structure in the sense of Bondarko's \cite{bondarko}. The notation $\underline{Hw}$ refers to the heart of the weight structure. The DG category $\mathcal{DM}_{gm,t}^{eff}(k;A)$ possesses such a weight structure, whenever the exponential characteristic of $k$ is invertible in $A$. In this case, the heart is given by the additive category of effective Chow motives. Therefore, we obtain a realization map
$$K(\mathcal{V}_k) \to K(\mathcal{C}^{gm}) \to K(\mathrm{Com}^b(\mathrm{Chow}^{eff}))\text{.}$$
On the level of $K_0$ this recovers a well-known construction of Gillet--Soul\'e \cite{gs}. Details will appear elsewhere. In \cite[Problem 7.5]{MR3993931}, the authors speculate about the existence of such a \emph{Gillet--Soul\'e realization} and pose several questions about its properties. 


\end{document}